\documentclass[a4paper,12pt,twoside]{amsart}
\usepackage{amsmath,amsthm}
\usepackage{amsfonts}
\usepackage{amssymb}
\usepackage[english]{babel}

\newtheorem{theo}{Theorem}[section]
\newtheorem{lem}[theo]{Lemma}
\newtheorem{prop}[theo]{Proposition}
\newtheorem{cor}[theo]{Corollary}

\renewcommand{\P}{{\mathbb P}}

\newcommand{\D}{{\mathcal D}}

\newcommand{\X}{{\mathcal X}}

\newcommand{\E}{{\mathbb E}}
\newcommand{\Y}{{\mathcal Y}}
\newcommand{\e}{{\text{e}}}
\newcommand{\C}{{\mathbb C}}

\newcommand{\NN}{{\mathbb N}}
\newcommand{\N}{{\mathbb N}}
\newcommand{\DD}{{\bar{\mathbb D}}}
\newcommand{\R}{{\mathbb R}}

\begin{document}

\title[Global CLT for Markov chains with normal transitions]
{Global central limit theorems for Markov chains\\with normal transition operators}

\author{Christophe Cuny}
\address{Univ Brest, CNRS, UMR 6205, Laboratoire de Math\'ematiques de Bretagne Atlantique, France}
\email{christophe.cuny@univ-brest.fr}

\author{Michael Lin}
\address{Department of Mathematics, Ben-Gurion University, Beer-Sheva, Israel}
\email{lin@math.bgu.ac.il}

\dedicatory{Dedicated to Magda Peligrad}

\subjclass[2010]{Primary: 60J05, 60F05. Secondary: 60J35, 47A35}
\keywords{Markov operators,  central limit theorems, uniform ergodicity,
 reversible Markov chains, normal operator,  annealed CLT, $L_2$-normalized CLT}

\begin{abstract}
Let $P$ be a Markov operator on a general state space $(S,\Sigma)$ with invariant probability
$m$, assumed ergodic. When $P$ is normal in $L_2(m)$, we prove the necessity of uniform
ergodicity, under some spectral conditions (satisfied when $P$ is symmetric), for the
annealed CLT to hold for {\it every} centered $ f \in L_2(m)$. We also obtain results without assuming normality under some Ritt-type conditions.
\end{abstract}

\maketitle

\section{Introduction}
Let $(S,\Sigma)$ be a measurable space and $P(x,A):\,(S,\Sigma) \to [0,1]$ a transition 
probability.  We denote also by $P$ the Markov operator $Pf(x) :=\int f(y) P(x,dy)$, defined 
for bounded measurable functions. We assume in this paper that $P$ has an invariant probability 
$m$, i.e.  $\int P(x,A)dm(x)=m(A)$ for any $A \in \Sigma$.
We will assume throughout the paper that $P$ is {\it ergodic} in $L_\infty(m)$, i.e.
$Pf=f\in L_\infty(m)$ implies that $f$ is a constant a.e.\footnote{This is compatible with 
the definition in ergodic theory, and weaker than the definition in \cite{MT}.}
We put $Ef:=\int f\,dm$ for $f\in L_1(m)$, and call $f$ {\it centered} when $Ef=0$.
\smallskip

Let $\Omega= S^\mathbb N$ be the space of trajectories, with $\sigma$-algebra
$\mathcal A:= \Sigma^{\otimes \mathbb N}$, and let $\P_m$ be the probability of the chain
with transition probability $P$ and initial distribution $m$.
By invariance of $m$,  the probability $\P_m$ is invariant under the shift on
$\Omega$, defined by  $\theta\{x_n\} =\{x_{n+1}\}$.
We denote by $X_k$ the $k$th coordinate  projection of $\Omega$ onto $S$, so
$X_k= X_0 \circ \theta^k$.
It is well-known that $\theta$ is ergodic on $(\Omega, \mathbb P_m)$ if and only if $P$ is ergodic
in $L_\infty(S,m)$ (e.g.  \cite[Theorem 2.5]{ALW}).
\smallskip


{\bf Definition.} A {\it real} centered $f \in L_2(m)$  satisfies the {\it annealed CLT}
 if in $(\Omega,\P_m)$ we have
\begin{equation} \label{clt}
\frac1{\sqrt n} \sum_{k=1}^n f(X_k) \overset{\mathcal D}\to{\mathcal N(0,\sigma^2)}.
\end{equation}
The definition includes the degenerate case $\mathcal N(0,0) \sim \delta_0$ (the Dirac
measure at $0$).
\smallskip

{\bf Definition.} A real centered $0 \ne f \in L_2(m)$ satisfies the {\it $L_2$-normalized CLT}
if, in $(\Omega, \mathbb P_m)$,
\begin{equation} \label{self-clt}
\frac1{\sigma_n(f)} \sum_{k=1}^n f(X_k) \overset{\mathcal D}\to{\mathcal N(0,1)},
\end{equation}
where $\sigma_n(f) =\|\sum_{k=1}^n f(X_k)\|_2$. The definition assumes $\sigma_n(f) >0$
for $n\ge N$.
\medskip

We denote by $L_{p,0}=L_{p,0}(m)=\{f \in L_p(m): \int_S f\,dm=0\}$ the space of all centered
real $f \in L_p(m)$. By the mean ergodic theorem, $L_{p,0}(m)=\overline{(I-P)L_{p}(m)}$
fot $1 \le p < \infty$.
\medskip

Gordin and Lifshits \cite{GL} proved that {\it if $P$ is ergodic and $f \in (I-P)L_2(m)$, then
$f$ satisfies the annealed CLT, with $\sigma^2=\|g\|_2^2-\|Pg\|_2^2$, when $f=(I-P)g$ with $g$
 centered}.

Consequently, if $L_{2,0}(m)=(I-P)L_{2,0}(m)$ (equivalently, if $I-P$ is invertible on 
$L_{2,0}(m)$) then the chain satisfies a {\bf global} annealed CLT:  
every $f \in L_{2,0}(m)$ satisfies the annealed CLT.

\medskip

The condition $L_{2,0}=(I-P)L_{2,0}$ is known to be equivalent to $P$ being {\it uniformly 
ergodic} in $L_2$, i.e.  $\|\frac1n\sum_{k=1}^n P^k-E\|_2 \to 0$ \footnote{Put $T:=P|L_{2.0}$; 
then, for large $n$, $(I-T)(\frac1n\sum_{k=1}^n\sum_{j=0}^{k-1} T^j)= 
I-\frac1n \sum_{k=1}^n  T^k$ is invertible, so $I-T$ is invertible too.}
\medskip

The above global CLT was proved by Rosenblatt \cite[p. 217]{Ro} when $\|P^n-E\|_2 \to 0$, 
which is equivalent to (exponetial) $rho$-mixing of the chain \cite[p. 207]{Ro}.
On the complex $L_2(m)$ the condition $\|P^n-E\|_2 \to 0$ is equivalent to $P$ having a
{\it spectral gap} -- the spectral radius of $P|L_{2,0}$ is strictly less than 1.
By \cite[Proposition 3.3]{DL4}, $P$ has a spectral gap if and only if it is uniformly ergodic
and has no unimodular eigenvalues $\ne 1$; see also Theorem \ref{totally-ergodic} in the Appendix.
\medskip

 In \cite[Section IV.8]{BI}, Gordin and Lifshits proved the quenched CLT (convergence of
 \eqref{clt} in $(\Omega,\P_x)$  for a.e. $x$) when  $f \in (I-P)L_2$, and even the functional
form of the quenched CLT. Hence the global functional quenched CLT holds when $P$ is
uniformly ergodic.
\medskip

The Law of the Iterated Logarithm  (LIL), and its functional form for $f\in (I-P)L_2(m)$ follows, for instance, from
\cite[Corollary 1]{ZW} (for the functional form, use also Strassen \cite{Strassen}). 


It follows that when $P$ is uniformly ergodic, every $0\ne f\in L_{2,0}(m)$ satisfies the LIL,
and in the symmetric case even the functional LIL.
\medskip

In this article we prove (Corollary \ref{symmetric1}) that {\it if $P$ is symmetric and
the global annealed CLT holds\footnote{Actually, it is sufficient that for all real centered $f\in L_2(m)$, $\big(\frac1n\sum_{k=1}^n f(X_k)\big)_{n\in \N}$ be tight}, then $P$ is uniformly ergodic}; hence the functional
quenched CLT and the functional LIL hold  "globally" as well. Moreover, {\it if $P$ is
symmetric and every non-zero $f \in L_{2,0}(m)$ satisfies a {\rm non-degenerate}
annealed CLT, then $\|P^n-E\|_2 \to 0$} (Corollary \ref{symmetric3}).
\bigskip

When $P$ is symmetric and Harris recurrent, the result of Corollary 3.2 may be derived from 
the recent preprint \cite{BRZ}, combining their Theorem 3.1 with Theorem 2.2 of \cite{DL4}. 
In our paper, Harris recurrence is not assumed.

\section{ Global annealed CLT and uniform tightness}



Put  $L_{p,0}=L_{p,0}(m):={\{f\in L_p(m)\,:\, \int_S f\, dm=0\}}$.
Throughout this section, $\X \subset L_{2,0}$ is a Banach space of functions or equivalence
classes (mod $m$) of functions on $(S,\Sigma,m)$, such that $\X$ is continuously embedded
in $L_{2,0}$ (i.e. $\|f\|_2 \le C\|f\|_\X$ for $f \in \X$), e.g. $\X=L_{p,0}$ for $p\ge 2$.

\medskip

For every $f\in \X$ and every $n\in \NN$, set $S_n(f):= f(X_0)+\ldots +f(X_{n-1})$.

\medskip

We first recall two results from \cite{CL23} that we state in a slightly different form.

\begin{prop}\label{prop-tightness}
Fix $\X$, and consider the following set of conditions in $(\Omega,\mathcal A,\P_m)$:
\begin{itemize}
\item [$(i)$] For every $f\in \X$, there exists a martingale $(M_n)_{n\in \NN}$ with stationary
and ergodic increments in $L_2(\P_m)$, such that for every $\varepsilon>0$,
	$\P_m(|S_n(f)-M_n|> \varepsilon\sqrt n)\underset{n\to +\infty}\longrightarrow 0$;
\item [$(ii)$] For every $f\in \X$, $(S_n(f)/\sqrt n)_{n\in \NN}$
converges in distribution to some normal law.
\item [$(iii)$] For every $f\in \X$, $(S_n(f)/\sqrt n)_{n\in \NN}$ is tight.
\item [$(iv)$] There exists a non-increasing function $c:\, (0,+\infty)\to [0,+\infty)$ with
$\lim_{a\to \infty}c(a) =0$, such that for every $f\in \X$
$$
\sup_{n\in \NN}\P_m \big(|S_n(f)|> a\|f\|_\X \sqrt n\big) \le c(a)\, .
$$
\end{itemize}
Then, $(i)\Rightarrow (ii) \Rightarrow (iii)\Rightarrow (iv)$.
\end{prop}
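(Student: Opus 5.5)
The plan is to prove the three implications separately. The first two are standard. The third, $(iii)\Rightarrow(iv)$, needs a Baire category argument, and that is where the real work lies.

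\emph{$(i)\Rightarrow(ii)$.} Fix $f\in\X$ and let $(M_n)$ be the martingale given by $(i)$. Its increments $D_k$ are stationary and ergodic, and lie in $L_2(\P_m)$. The increments are martingale differences, so they are centered. By the Billingsley--Ibragimov CLT for stationary ergodic martingale differences,
$$M_n/\sqrt n\overset{\mathcal D}\to\mathcal N(0,\E D_1^2).$$
This includes the degenerate case $D_1=0$. By $(i)$, $(S_n(f)-M_n)/\sqrt n\to0$ in probability. Slutsky's lemma then gives $S_n(f)/\sqrt n\overset{\mathcal D}\to\mathcal N(0,\E D_1^2)$.

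\emph{$(ii)\Rightarrow(iii)$.} A sequence of real random variables that converges in distribution is tight, by Prokhorov's theorem (or simply because the limit law is tight and the distribution functions converge at continuity points).

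\emph{$(iii)\Rightarrow(iv)$.} For $a>0$ define
$$\Phi_a(f):=\sup_{n\in\NN}\P_m\big(|S_n(f)|>a\sqrt n\big),\qquad f\in\X.$$
Tightness, together with the fact that each single $S_n(f)$ is finite a.s., gives $\Phi_a(f)\to0$ as $a\to\infty$ for every $f$. Fix $\varepsilon>0$ and set
$$F_k:=\{f\in\X:\ \Phi_k(f)\le\varepsilon\}=\bigcap_n\{f:\ \P_m(|S_n(f)|>k\sqrt n)\le\varepsilon\}.$$
Each set in this intersection is closed in $\X$. Indeed, if $f_j\to f$ in $\X$, then $f_j\to f$ in $L_2(m)$ by the continuous embedding, so $S_n(f_j)\to S_n(f)$ in $L_2(\P_m)$ by stationarity. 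For a strict-inequality event, convergence in probability gives
$$\P_m(|S_n(f)|>k\sqrt n)\le\liminf_j\P_m(|S_n(f_j)|>k\sqrt n),$$
which is exactly the lower semicontinuity needed for closedness. Since $\bigcup_k F_k=\X$ and $\X$ is complete, Baire's theorem yields a ball $B(f_0,r)\subset F_{k_0}$.

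\emph{Main step: pass from the ball to a uniform bound.} Let $\|g\|_\X\le1$. Then $f_0$ and $f_0+rg$ both lie in $F_{k_0}$, and $S_n$ is linear, so
$$\P_m\big(|S_n(rg)|>2k_0\sqrt n\big)\le2\varepsilon.$$
By homogeneity, for every $f\ne0$ (applying this to $g=f/\|f\|_\X$),
$$\sup_n\P_m\big(|S_n(f)|>(2k_0/r)\|f\|_\X\sqrt n\big)\le2\varepsilon.$$
Thus for each $\varepsilon$ there is $a_\varepsilon<\infty$ with $\sup_{\|f\|_\X\le1}\sup_n\P_m(|S_n(f)|>a_\varepsilon\|f\|_\X\sqrt n)\le2\varepsilon$. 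The case $f=0$ is trivial, since the event is empty.

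Now define
$$c(a):=\sup_{f\ne0}\sup_n\P_m\big(|S_n(f)|>a\|f\|_\X\sqrt n\big).$$
This function is non-increasing in $a$ and takes values in $[0,1]$. By the previous step, $c(a)\le2\varepsilon$ for $a\ge a_\varepsilon$; since $\varepsilon$ is arbitrary, $c(a)\to0$, which proves $(iv)$.

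The delicate point is the closedness of the $F_k$, which is needed to apply Baire. It relies on using the strict-inequality events, on lower semicontinuity under convergence in probability, and on the continuity of $f\mapsto S_n(f)$ from $\X$ into $L_2(\P_m)$.
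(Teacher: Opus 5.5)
Your proof is correct. For $(i)\Rightarrow(ii)$ it coincides with the paper's argument: the CLT for stationary ergodic martingale differences, followed by Slutsky. For $(ii)\Rightarrow(iii)\Rightarrow(iv)$ the paper gives no argument and only refers to \cite[Proposition 7.1]{CL23}; your Baire-category (Banach principle) argument is the standard route and makes the statement self-contained. That argument proves closedness of the $F_k$ through the strict-inequality events and lower semicontinuity under convergence in probability, then finds a ball in some $F_{k_0}$ (which contains the closed ball since $F_{k_0}$ is closed) and concludes by translation and homogeneity.
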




\begin{proof} $(i)\Rightarrow (ii)$ follows from the CLT for martingales in $L^2$ with stationary and ergodic increments. 
The proof of $(ii)\Rightarrow (iii) \Rightarrow (iv)$ is done in \cite[Proposition 7.1]{CL23}.
\end{proof}

\smallskip

\bigskip

\begin{prop}\label{prop-variance}
Assume that item $(iv)$ of Proposition \ref{prop-tightness} holds. Then, the set
$$
\Y:=\{f\in \X\, :\, \exists\,  \sigma_f\ge 0\, \mbox{such that}\, \, 
(S_n(f)/\sqrt{n})_{n\in \NN} \,\Rightarrow \mbox{ ${\mathcal N}(0,\sigma_f^2)$} \}
$$
is closed in $\X$. Moreover, there exists $C>0$ such that for every $f\in \Y$, $\sigma_f\le C\|f\|_\chi$. In particular, if $P\X\subset \X$, for every $f\in \X$, $\|f\|_2^2-\|Pf\|_2^2
\le C^2\|(I-P)f\|_\X^2$.
\end{prop}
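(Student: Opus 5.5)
The argument has three steps: first a bound $\sigma_f \le C\|f\|_\X$ for $f\in\Y$, then closedness of $\Y$ using that bound, and finally the Gordin--Lifshits identification of the variance.

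\emph{Variance bound.} Fix $f\in\Y$ with $f\ne 0$, and apply condition $(iv)$ of Proposition \ref{prop-tightness}. It gives $\P_m(|S_n(f)|>a\|f\|_\X\sqrt n)\le c(a)$ for all $n$. Since $S_n(f)/\sqrt n\Rightarrow \mathcal N(0,\sigma_f^2)$ and $\{|x|>a\|f\|_\X\}$ is open, the portmanteau theorem yields
$$
\P\big(|\sigma_f Z|>a\|f\|_\X\big)\le \liminf_n \P_m\big(|S_n(f)|>a\|f\|_\X\sqrt n\big)\le c(a),
$$
where $Z\sim\mathcal N(0,1)$. Choose $a_0$ with $c(a_0)<\P(|Z|>1)$, which is possible because $c(a)\to0$. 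If $\sigma_f>a_0\|f\|_\X$, then $\P(|\sigma_f Z|>a_0\|f\|_\X)\ge \P(|Z|>1)>c(a_0)$, a contradiction. Hence $\sigma_f\le C\|f\|_\X$ with $C:=a_0$, a constant independent of $f$.

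\emph{Closedness.} Let $f_k\in\Y$ with $f_k\to f$ in $\X$. Applying the bound to $f_k-f_j$ is not possible, since that difference is not known to lie in $\Y$. The useful point is that the bound gives $\sigma_{f_k}\le C\|f_k\|_\X$, so the sequence $(\sigma_{f_k})$ is bounded. Pass to a subsequence along which $\sigma_{f_k}\to\sigma$. The goal is to show $S_n(f)/\sqrt n\Rightarrow\mathcal N(0,\sigma^2)$. Use the bounded Lipschitz metric $d$ on laws, or equivalently test against bounded Lipschitz $\varphi$, and split
$$
d\big(S_n(f)/\sqrt n,\mathcal N(0,\sigma^2)\big)\le d\big(S_n(f)/\sqrt n,S_n(f_k)/\sqrt n\big)+d\big(S_n(f_k)/\sqrt n,\mathcal N(0,\sigma_{f_k}^2)\big)+d\big(\mathcal N(0,\sigma_{f_k}^2),\mathcal N(0,\sigma^2)\big).
$$
\begin{itemize}
\item The first term is uniformly small in $n$ by $(iv)$ applied to $f-f_k\in\X$, because $S_n$ is linear. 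Indeed, $\E\min(1,|S_n(f-f_k)|/\sqrt n)\le a\|f-f_k\|_\X+c(a)$; choose $a$ large first, then $k$ large.
\item The second term tends to $0$ as $n\to\infty$ for fixed $k$, since $f_k\in\Y$.
\item The third term tends to $0$ as $k\to\infty$.
\end{itemize}
Hence $f\in\Y$ with $\sigma_f=\sigma$. Since the limit law is the same for every subsequence, $\sigma$ is unique. The main care is in this uniformity argument, namely choosing $a$ before $k$; it is routine once stated this way.

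\emph{Last claim.} Assume $P\X\subset\X$ and let $f\in\X$. Put $h=(I-P)f\in\X$. By Gordin--Lifshits, $h$ satisfies the annealed CLT with $\sigma_h^2=\|f\|_2^2-\|Pf\|_2^2$, so $h\in\Y$. Their result assumes $f$ centered, and this holds since $\X\subset L_{2,0}$. The shift between $S_n$, which starts at index $0$, and the sum from $1$ to $n$ does not affect the limit, by stationarity. The variance bound now gives $\|f\|_2^2-\|Pf\|_2^2=\sigma_h^2\le C^2\|(I-P)f\|_\X^2$.
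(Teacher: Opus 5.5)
Your proof is correct. It coincides with the paper's only in the last step, the Gordin--Lifshits decomposition showing $(I-P)f\in\Y$ with $\sigma^2_{(I-P)f}=\|f\|_2^2-\|Pf\|_2^2$; for the other two parts you take a different, more self-contained route.

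The paper cites \cite{CL23} for the closedness of $\Y$. It proves the variance bound by contradiction: using homogeneity of $\Y$, it produces $f_r\in\Y$ with $\sigma_{f_r}=1$ and $\|f_r\|_\X\to 0$. It then invokes the characteristic-function estimate (17) of \cite{CL23} to get $|e^{-t^2/2}-1|=0$ for all $t$, which is absurd.

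You instead pass condition $(iv)$ directly to the limit law, via the portmanteau theorem on the open set $\{|x|>a\|f\|_\X\}$. This gives the bound in one line, with an explicit constant $C=a_0$, where $c(a_0)<\P(|Z|>1)$, depending only on the function $c$. You then prove closedness yourself with the bounded-Lipschitz metric:
\begin{itemize}
\item proving the variance bound first gives boundedness of $(\sigma_{f_k})$, hence a convergent subsequence;
\item applying $(iv)$ to $f-f_k$ gives the estimate uniform in $n$; this uses only $f-f_k\in\X$, not membership in $\Y$, as you rightly note.
\end{itemize}
The paper gains brevity by reusing machinery from \cite{CL23}. You gain a proof that needs nothing beyond $(iv)$ and standard weak-convergence facts.

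One cosmetic point about your final remark. The subsequence is taken in $k$, not in $n$, and you establish $S_n(f)/\sqrt n\Rightarrow\mathcal N(0,\sigma^2)$ along the full sequence in $n$. Uniqueness of weak limits then forces $\sigma=\sigma_f$, independently of the subsequence. Hence $\sigma_{f_k}\to\sigma_f$ along the whole sequence, and that is the correct reading of ``$\sigma$ is unique.''
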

\begin{proof}

 The proof that $\Y$ is closed is done in \cite{CL23}, see the middle of page 190. 
 
\medskip

Assume ad absurdum that there does not exist $C>0$ such that $\sigma_f\le C\|f\|_\X$ for
 every $f\in \Y$. Then, it is possible to find $(f_r)_{r\in \NN}$ in $\Y$ such that
 $\sigma_{f_r}=1$ for every $r\in \NN$ and $\|f_r\|_\X \to 0$.

\medskip

Using $(17)$ from \cite{CL23} with $K=1/\sqrt  {\|f_r\|_\X}$, we see that for every $t\in \R$,
$$
|{\rm e}^{it^2\sigma_{f_r}^2/2}-1|=\limsup_{n\to +\infty}|\E({\rm e}^{it S_n(f_r)/\sqrt n})-1|\le  
|t|\|f_r\|_\X^{1/2} +c(\|f_r\|_\X^{-1/2})\underset{r\to +\infty}\longrightarrow 0\, ,
$$ 
with $c$ the function in item $(iv)$, a contradiction.

 \medskip

Finally, it remains to prove that $\|f\|_2^2-\|Pf\|_2^2 \le C^2\|f-Pf\|_\X^2$, for every $f\in \X$.

\medskip

Let $f\in \X$. Set $g:= (I-P)f$ and, following Gordin and Lifshits \cite{GL}, set  $d_n:= f(X_n)-Pf(X_{n-1})$. Then $(d_n)_{n\in \NN}$
is a stationary sequence of martingale differences, hence $((d_1+\ldots +d_n)/\sqrt n)_{n\in \NN}$
converges in law to some normal law with variance $\E(d_1^2)=\|f\|^2-\|Pf\|^2$. Moreover,
$S_n(g)=(d_1+\ldots +d_n) + (Pf(X_0)-Pf(X_{n})$. Hence, $(S_n(g)/\sqrt n)_{n\in \NN}$ converges
in law to a normal law and $g\in \Y$.  In particular,
$\|f\|^2-\|Pf\|^2=\sigma_g^2\le C^2\|g\|_\X^2=C^2\|(I-P)f\|_\X^2$.
\end{proof}

\medskip

\begin{cor}\label{corollary}
Assume that for every $f\in \X$, $(S_n(f)/\sqrt n)_{n\in \NN}$ converges to some normal law 
and that $P\X\subset \X$. Then there exists $C>0$ such that for every $f\in \X$,
\begin{equation} \label{inequality}
\|f\|_2^2-\|Pf\|_2^2\le C^2 \|f-Pf\|_\X^2\, .
\end{equation}
\end{cor}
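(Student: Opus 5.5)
The corollary follows by chaining Propositions \ref{prop-tightness} and \ref{prop-variance}.

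First, the hypothesis that $(S_n(f)/\sqrt n)_{n\in\NN}$ converges in distribution to some normal law for every $f\in\X$ is exactly item $(ii)$ of Proposition \ref{prop-tightness}. That proposition gives $(ii)\Rightarrow(iii)\Rightarrow(iv)$. Hence there is a non-increasing function $c$ with $c(a)\to 0$ as $a\to\infty$ such that
$$\sup_{n\in\NN}\P_m\big(|S_n(f)|>a\|f\|_\X\sqrt n\big)\le c(a)\qquad\text{for all } f\in\X.$$
The uniformity in $f$ comes from the implication $(iii)\Rightarrow(iv)$, which is proved in \cite{CL23} by a Baire category argument on the Banach space $\X$. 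We do not reprove it; we only invoke it.

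Second, since item $(iv)$ holds and $P\X\subset\X$, Proposition \ref{prop-variance} applies directly. Its last assertion is precisely that there exists $C>0$ with $\|f\|_2^2-\|Pf\|_2^2\le C^2\|(I-P)f\|_\X^2$ for every $f\in\X$. This is \eqref{inequality}.

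For the record, that step works as follows. Take $g=(I-P)f$, which lies in $\X$ because $P\X\subset\X$. The Gordin--Lifshits martingale decomposition $S_n(g)=\sum_{k=1}^n d_k+Pf(X_0)-Pf(X_n)$ shows that $g\in\Y$ with $\sigma_g^2=\|f\|_2^2-\|Pf\|_2^2$. Combining this with the bound $\sigma_g\le C\|g\|_\X$ gives the inequality.

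The only substantive point is the uniform tightness $(iv)$, which supplies a constant $C$ independent of $f$. It is already established in the cited results, so no step here is an obstacle: the corollary is a direct combination of the two propositions.
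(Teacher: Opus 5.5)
Your proposal is correct and is exactly the paper's proof: item $(ii)$ of Proposition \ref{prop-tightness} yields the uniform tightness $(iv)$. The last assertion of Proposition \ref{prop-variance}, which uses $P\X\subset\X$ to place $(I-P)f$ in $\Y$ with $\sigma_{(I-P)f}^2=\|f\|_2^2-\|Pf\|_2^2$, then gives \eqref{inequality}. One cosmetic point in your recap: with the convention $S_n(f)=f(X_0)+\ldots+f(X_{n-1})$ the boundary term is $f(X_0)-f(X_n)$ rather than $Pf(X_0)-Pf(X_n)$; this is immaterial, since either term is bounded in $L_2(\P_m)$ and hence $o(\sqrt n)$.
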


 \noindent {\bf Proof.} By Proposition \ref{prop-tightness}, we know that item $(iv)$ of that proposition holds and we conclude by Proposition \ref{prop-variance}.
 \hfill $\square$

\begin{prop} \label{equivalence}
Assume that $P\X\subset \X$ and that $(I-P)\X$ is dense in $\X$. Then, all items of
Proposition \ref{prop-tightness} are equivalent, and there exists $C>0$ such that \eqref{inequality} holds.
\end{prop}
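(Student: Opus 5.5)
Since Proposition \ref{prop-tightness} already gives $(i)\Rightarrow(ii)\Rightarrow(iii)\Rightarrow(iv)$, the plan is to prove $(iv)\Rightarrow(i)$; the inequality \eqref{inequality} then follows from Corollary \ref{corollary}, since $(ii)$ holds and $P\X\subset\X$. The strategy is a density-plus-approximation argument: establish $(i)$ on the dense subspace $(I-P)\X$ via the Gordin--Lifshits martingale decomposition, then use the uniform tightness bound $(iv)$ to pass to the closure.

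\emph{Step 1: $(i)$ on the dense set.} Let $g=(I-P)h$ with $h\in\X$. As in the proof of Proposition \ref{prop-variance}, put $d_k:=h(X_k)-Ph(X_{k-1})$ and $M_n:=d_1+\cdots+d_n$. These are stationary martingale differences in $L_2(\P_m)$. They are ergodic because $\theta$ is ergodic (by ergodicity of $P$) and $d_k$ is a fixed function of $(X_{k-1},X_k)$. We have $S_n(g)-M_n=Ph(X_0)-Ph(X_n)$, whose $L_2$ norm is bounded by $2\|Ph\|_2$. Chebyshev then gives $\P_m(|S_n(g)-M_n|>\varepsilon\sqrt n)\to0$.

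\emph{Step 2: closure.} Let $f\in\X$ and choose $g_r\in(I-P)\X$ with $\|f-g_r\|_\X\to0$, with martingales $M^{(r)}_n$ from Step 1. We want a limiting martingale $M_n$. The martingale increments $d^{(r)}_1$ should form a Cauchy sequence in $L_2(\P_m)$. Indeed, $d^{(r)}_1-d^{(s)}_1$ is the martingale increment associated with $g_r-g_s\in(I-P)\X$, and its $L_2$ norm squared equals $\sigma^2_{g_r-g_s}$. By Proposition \ref{prop-variance}, which requires only $(iv)$, this is at most $C^2\|g_r-g_s\|_\X^2\to0$.

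So $d^{(r)}_1\to D$ in $L_2(\P_m)$. Set $d_k:=D\circ\theta^{k-1}$ and $M_n:=\sum_{k\le n}d_k$. These are stationary and ergodic martingale differences: the conditional expectation $\E(D\mid X_0,\dots)$ vanishes in the limit, since each $d^{(r)}_1$ is measurable with respect to $\sigma(X_0,X_1)$ and has zero conditional mean given $X_0$. Moreover, $\|M_n-M^{(r)}_n\|_2=\sqrt n\,\|D-d^{(r)}_1\|_2$. Then we split
$$
S_n(f)-M_n=S_n(f-g_r)+\bigl(S_n(g_r)-M^{(r)}_n\bigr)+\bigl(M^{(r)}_n-M_n\bigr).
$$
The first term is controlled by $(iv)$: $\P_m(|S_n(f-g_r)|>\varepsilon\sqrt n/3)\le c\bigl(\varepsilon/(3\|f-g_r\|_\X)\bigr)$, uniformly in $n$. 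The second term tends to $0$ in probability by Step 1. The third is controlled by Chebyshev: the probability is at most $9\|D-d^{(r)}_1\|_2^2/\varepsilon^2$. Taking $\limsup_n$ and then $r\to\infty$ gives $(i)$.

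\emph{Main obstacle.} The delicate point is Step 2's Cauchy property of the martingale increments. This relies on the variance bound $\sigma_g\le C\|g\|_\X$ of Proposition \ref{prop-variance}, applied to differences $g_r-g_s$. These differences lie in $(I-P)\X\subset\Y$, so the bound applies, and $\sigma^2_{g}$ equals $\E(d_1^2)$ for the Gordin--Lifshits martingale of $g$. If this identification were an issue, I would instead use linearity: the martingale of $(I-P)(h_r-h_s)$ is exactly the difference of martingales.
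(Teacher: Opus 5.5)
Your proposal is correct and follows essentially the same route as the paper: the Gordin--Lifshits increments of the approximants are shown to be Cauchy in $L_2(\P_m)$ via the variance bound of Proposition \ref{prop-variance} (equivalently \eqref{inequality}), and the limiting martingale is compared to $S_n(f)$ using Chebyshev's inequality together with the uniform bound $(iv)$. One harmless slip, which the paper's text also contains: with $S_n(g)=\sum_{k=0}^{n-1}g(X_k)$ and $g=(I-P)h$, the boundary term is $S_n(g)-M_n=h(X_0)-h(X_n)$ rather than $Ph(X_0)-Ph(X_n)$; it is still bounded in $L_2$ by $2\|h\|_2$, so the argument is unaffected.
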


\noindent {\bf Remark.} In particular, the corollary applies when $\X=L_{2,0}(m)$.

\medskip

\noindent {\bf Proof.} We already know that the items of Proposition \ref{prop-tightness} satisfy $(i)\Rightarrow (ii) \Rightarrow (iii)  \Rightarrow (iv)$. It remains to prove that, under our assumption, $(iv)\Rightarrow (i)$.

\medskip

Hence, let us assume that $(iv)$ holds. By Proposition \ref{prop-variance}, \eqref{inequality}
holds. Let $f\in \X$. By assumption, there exists $(f_r)_{r\in \NN}\in \X^\NN$ such that
$(I-P)f_r\underset{r\to +\infty}\longrightarrow f$. For every $r,n\in \NN$, set
$d_{r,n}:= f_r(X_n)-Pf_r(X_{n-1})$. Again, $(d_{r,n})_{n\in \N}$ is a stationary sequence of martingale differences and, for every $r,s, n\in \NN$,
$$
\|d_{r,n} -d_{s,n}\|_{2,\P_m}^2 =\|f_r-f_s\|_2^2-
\|P(f_r-f_s)\|_2^2\le C^2\|(I-P)f_r-(I-P)f_s\|_2^2\underset{r,s\to +\infty}\longrightarrow 0\, .
$$
Thus, for every $n\in \NN$, $(d_{r,n})_{r\in \NN}$ is Cauchy in $L_2(\P_m)$, hence converges, say to $d_n$. Then, $(d_n)_{n\in \NN}$ is a sequence of stationary and ergodic martingale differences in $L^2(\P_m)$. Setting $M_n:= \sum_{k=1}^n d_k$, it remains to prove that item $(i)$ is satisfied with this choice.

\medskip

Let $\varepsilon>0$. Let $n,r\in \NN$. Set $M_{r,n}:=
\sum_{k=1}^n d_{r,k}$. We have
\begin{gather*}
\P_m(|S_n(f)-M_n|>\varepsilon \sqrt n)\le \P_m(|S_n(f)-M_{r,n}|>\varepsilon \sqrt n/2)+\P_m(|M_{r,n}-M_n|>\varepsilon \sqrt n/2)\, .
\end{gather*}
Now, by Tchebychev's inequality,
$$
\sup_{n\in \NN}\P_m(|M_{r,n}-M_n|>\varepsilon \sqrt n/2)\le
\frac4{\varepsilon^2} \|d_{r,1}-d_1\|_{2.\P_m}^2\underset{r\to +\infty}\longrightarrow 0\, .
$$
Now, setting $g_r=f-(I-P)f_r$, we have $S_n(f)-M_{r,n}= Pf_r(X_0)-Pf_r(X_n)+S_n(g_r)$.

\medskip

Hence, using item $(iv)$ of Proposition \ref{prop-tightness}, we see that
$$
\P_m(|S_n(f)-M_{r,n}|>\varepsilon \sqrt n/2)   \le
$$
$$
P_m(|Pf_r(X_0)-Pf_r(X_n)|>\varepsilon \sqrt n/4)+
\P_m(|S_n(g_r)|>\varepsilon \sqrt n /4)
    \le \frac{64\|Pf_r\|_2^2}{\varepsilon^2n}+ c(\frac{\varepsilon}{4\|g_r\|_\X})\, .
$$
Then, we see that
$$
\limsup_{n\to +\infty} \P_m(|S_n(f)-M_n|>\varepsilon \sqrt n)\le
\sup_{n\in \NN}\P_m(|M_{r,n}-M_n|>\varepsilon \sqrt n/2)+c\big(\frac{\varepsilon}{4\|g_r\|_\X} \big) \underset{r\to +\infty}\longrightarrow 0\, .
$$
\hfill $\square$

\medskip

\medskip

\section{Application to the global CLT in $L_{2,0}(m)$}

In all of this section we assume that \emph{for  every $f\in L_{2,0}(m)$, $S_n(f)/\sqrt{n}$
{\rm does} converge in distribution to some normal law}, i.e. the global annealed CLT
holds in $L_{2,0}$. In view of Proposition \ref{equivalence} we may equivalently assume that for \emph{for every $f\in L_{2,0}(m)$, $(S_n(f)/\sqrt{n})_{n\in \N}$ is tight}. 

\medskip

  In particular, since we clearly have $PL_{2,0}(m)\subset L_{2,0}(m)$, and
$(I-P)L_{2,0}(m)$ is dense in $ L_{2,0}(m)$ by the mean ergodic theorem, Corollary
\ref{corollary} and Proposition \ref{equivalence} apply with $\X=L_{2,0}(m)$.

\medskip

Hence, assuming the global CLT in $L_{2,0}$ is equivalent to assuming that, for every $f\in L_{2,0}$, $(S_n(f)/\sqrt n)_{n\ge 1}$ is tight.

\subsection{The case of normal Markov operators}

We assume here, in addition to the global CLT in $L_{2,0}(m)$, that $P$ is normal, 
i.e. $P^*P=PP^*$.

Examples of normal Markov  operators with invariant probability are given by convolutions
defined by probability measures on the unit circle $\mathbb T$;
their uniform ergodicity is studied in \cite{DL4}.
\medskip

Fix $f\in L_{2,0}(m)$. Denote by $\rho_f$ the spectral measure associated with $f$
(on the Borel sets of $\DD:=\{z\in \C\, :\, |z|\le 1\}$) by the spectral theorem for normal
operators. Recall that $\rho_f$ is supported on $\sigma(P)$, the spectrum of $P$ (which 
is a subset of $\DD$), and that by the spectral theorem, for every $n\in \N$ and every 
complex numbers $a_0, \ldots a_n$,\footnote{See \cite[Corollary 8, page 898]{DS}, where 
$\rho_f(dz)=({\bf E}(dz)f,f)$ with the notations in item (ii) there.}
\begin{equation}\label{spectral}
\|a_0f+\ldots + a_n P^n f\|_2^2=\int_\DD |a_0+\ldots + a_nz^n|^2d\rho_f(z)\, .
\end{equation}
Since $P$ is ergodic and $f\in L_{2,0}(m)$, $\|f+\ldots +P^{n-1}f\|_2/n\underset{n\to +\infty}\longrightarrow 0$. In particular, by \eqref{spectral}, $\rho_f(\{1\})=0$.

\medskip

Applying \eqref{inequality} to $f+\ldots +P^rf$, the spectral theorem yields

$$
\int_\DD(1-|z|^2 )|1+\ldots +z^r|^2 d\rho_f(z) \le
C^2\int_\DD |1-z|^2|1+\ldots +z^r|^2 d\rho_f(z)\le 4C^2\|f\|_2^2\, .
$$

Using  Fatou's lemma and the fact that
$(1-|z|^2 )|1+\ldots +z^r|^2\underset{r\to +\infty} \longrightarrow \frac{1-|z|^2}{|1-z|^2}$
for $z \in \DD\backslash \{ 1\}$, we infer from $\rho_f(\{1\})=0$ that
\begin{equation}\label{necessity}
\int_{ \DD} \frac{1-|z|^2}{|1-z|^2} d\rho_f(z)\le 4C^2 \|f\|_2^2\, .
\end{equation}

\medskip

For every $K>0$ and $\beta\in (1/2,1]$, set
$$
\Lambda_{K,\beta}:=\{z\in \DD\, :\, |1-z|\le K(1-|z|)^\beta\}\, ,
$$
For every $\eta \in (0, 1]$, set
$$
\Delta_\eta:= \{z\in \DD\, :\, |1-z|\ge \eta\}\, .
$$

\medskip

Assume that the spectrum $\sigma(P)$ is included in $\Lambda_{K,\beta}\cup \Delta_\eta$ for
some $K>0$, some $\beta\in (1/2,1]$ and some $\eta\in (0,1]$. Then, since $\rho_f(\{1\})=0$,
\begin{gather*}
\int_\DD \frac{1+|z|}{|1-z|^{2-1/\beta}} d\rho_f(z) =
\int_{\Lambda_K\cup \Delta_\eta} \frac{1+|z|}{|1-z|^{2-1/\beta}} d\rho_f(z)\\ \le \frac{2}{\eta^{2-1/\beta}} \|f\|_2^2 + K^{1/\beta} \int_\DD \frac{1-|z|^2}{|1-z|^2} d\rho_f(z)\le L \|f\|_2^2\, ,
\end{gather*}
for some $L>0$.

\medskip

Hence, $\int_\DD \frac1{|1-z|^{2-1/\beta}}d\rho_f(z) <\infty$ which, 
by Theorem 4.4 of \cite{DL}, implies that
$f\in (I-P)^{1-1/2\beta} L_{2,0}(m) \subset L_{2,0}(m)$.

\medskip

We conclude that $L_{2,0}(m)=(I-P)^{1-1/2\beta} L_{2,0}(m)$, which, by items $(iii)$
and $(ii)$ of Proposition 2.1 of \cite{DL}, implies that $(I-P)L_{2,0}(m)= L_{2,0}(m)$;
hence $P$ is uniformly ergodic.

\medskip

We have proved the following.

\begin{theo} \label{normal1}
Let $P$ be an ergodic Markov operator which is normal in $L_2(m)$.
Assume that the spectrum of $P$ is included in $\Lambda_{K,\beta}\cup \Delta_\eta$ for
some $K>0$, some $\beta\in (1/2,1]$ and some $\eta\in (0,1]$. Then, the following are equivalent
\begin{itemize}
\item [$(i)$] For every $f\in L_{2,0}(m)$, $(S_n(f)/\sqrt{n})_{n\in \N}$ is tight; 
\item [$(ii)$] For every $f\in L_{2,0}(m)$, $(S_n(f)/\sqrt{n})_{n\in \N}$ converges to some normal law; 
\item [$(iii)$] There exists $C>0$ such that  $\|f\|^2-\|Pf\|^2\le C^2 \|(I-P)f\|^2$, for every $f\in L_{2,0}(m)$; 
\item [$(iv)$] There exists $C>0$ such that $\displaystyle \int_{ \DD} \frac{1-|z|^2}{|1-z|^2} d\rho_f(z)\le C^2\|f\|_2^2$, for every $f\in L_{2,0}(m)$;
\item [$(v)$] $P$ is uniformly ergodic.
\end{itemize}
\end{theo}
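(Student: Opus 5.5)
We prove the cycle of implications $(v)\Rightarrow(ii)\Rightarrow(i)\Rightarrow(iii)\Rightarrow(iv)\Rightarrow(v)$. Most of it has already been assembled in the discussion preceding the statement.

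For $(v)\Rightarrow(ii)$: if $P$ is uniformly ergodic, then $I-P$ is invertible on $L_{2,0}(m)$, as in the footnote of the Introduction. So every $f\in L_{2,0}(m)$ can be written $f=(I-P)g$ with $g\in L_{2,0}(m)$. The Gordin--Lifshits result \cite{GL} then gives the annealed CLT for $f$, which is exactly $(ii)$. The implication $(ii)\Rightarrow(i)$ is $(ii)\Rightarrow(iii)$ of Proposition \ref{prop-tightness}, taken with $\X=L_{2,0}(m)$.

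For $(i)\Rightarrow(iii)$ we use the hypotheses of Proposition \ref{equivalence} with $\X=L_{2,0}(m)$. We have $PL_{2,0}\subset L_{2,0}$, and $(I-P)L_{2,0}$ is dense in $L_{2,0}$ by the mean ergodic theorem. Proposition \ref{prop-tightness} turns tightness into item $(iv)$ there, and Proposition \ref{prop-variance} then gives \eqref{inequality} with the $L_2$-norm.

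For $(iii)\Rightarrow(iv)$ we repeat the spectral computation done before the statement. Apply (iii) to $f+Pf+\dots+P^rf$ and use \eqref{spectral} to get
\[
\int_\DD (1-|z|^2)\,|1+\dots+z^r|^2\,d\rho_f\le C^2\int_\DD |1-z^{r+1}|^2\,d\rho_f\le 4C^2\|f\|_2^2 .
\]
Ergodicity gives $\rho_f(\{1\})=0$. Letting $r\to\infty$ and applying Fatou's lemma yields \eqref{necessity}, which is (iv) with constant $4C^2$.

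The substantive step, and the only place the spectral hypothesis enters, is $(iv)\Rightarrow(v)$; this is the step I expect to be the main obstacle. We split $\DD$ according to $\sigma(P)\subset\Lambda_{K,\beta}\cup\Delta_\eta$.

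On $\Delta_\eta$ the weight $|1-z|^{-(2-1/\beta)}$ is bounded by $\eta^{-(2-1/\beta)}$.

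On $\Lambda_{K,\beta}$ we have $|1-z|\le K(1-|z|)^\beta$, so $1-|z|\ge (|1-z|/K)^{1/\beta}$. Hence
\[
\frac{1}{|1-z|^{2-1/\beta}}\le K^{1/\beta}\,\frac{1-|z|}{|1-z|^2}\le K^{1/\beta}\,\frac{1-|z|^2}{|1-z|^2}.
\]
Combining the two regions with (iv), we get $\int_\DD |1-z|^{-(2-1/\beta)}\,d\rho_f<\infty$ for every $f\in L_{2,0}(m)$.

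Theorem 4.4 of \cite{DL} converts this integrability into $f\in(I-P)^{1-1/2\beta}L_{2,0}(m)$. For normal $P$ this is essentially the functional calculus: $(I-P)^{-\alpha}f$ is defined precisely when $\int|1-z|^{-2\alpha}\,d\rho_f<\infty$, and here $\alpha=1-1/2\beta\in(0,1/2]$ because $\beta\in(1/2,1]$. Hence $L_{2,0}(m)=(I-P)^{\alpha}L_{2,0}(m)$ with $\alpha>0$.

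Finally, Proposition 2.1(ii),(iii) of \cite{DL} upgrades this to $(I-P)L_{2,0}(m)=L_{2,0}(m)$: surjectivity of a positive fractional power of $I-P$ forces surjectivity of $I-P$ itself. This is uniform ergodicity, by the footnote in the Introduction.

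The delicate points are twofold. First, the exponent bookkeeping must ensure $\beta>1/2$, so that the weight $|1-z|^{-(2-1/\beta)}$ has a genuinely positive exponent and is dominated by the Poisson-type kernel. Second, one must correctly invoke the results of \cite{DL} on fractional powers.
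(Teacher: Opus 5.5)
Your proposal is correct and follows the paper's own argument step for step. It uses Propositions \ref{prop-tightness}, \ref{prop-variance} and \ref{equivalence} with $\X=L_{2,0}(m)$ for the probabilistic equivalences, the spectral computation with $f+\dots+P^rf$ and Fatou's lemma to get \eqref{necessity}, the splitting over $\Lambda_{K,\beta}\cup\Delta_\eta$ followed by Theorem 4.4 and Proposition 2.1 of \cite{DL} for $(iv)\Rightarrow(v)$, and \cite{GL} for $(v)\Rightarrow(ii)$.
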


\smallskip

{\bf Remark.} If $P$ is normal and 1 is isolated in $\sigma(P)$ (i.e.
$\sigma(P) \setminus \{1\} \subset \Delta_\eta$ for some $\eta>0$), then by the spectral
theorem $P$ is uniformly ergodic\footnote{A contraction $T\ne I$
in Hilbert space with $\sigma(T)=\{1\}$ is never uniformly ergodic \cite[Theorem 6]{Ze}, e.g. $T:=(I+V)^{-1}$ on $L_2[0,1]$, with $V$ the Volterra operator \cite[p. 370]{Ze}.},
and the global CLT holds by \cite{GL}.

%
%
%

\begin{cor} \label{symmetric1}
Let $P$ be an ergodic Markov operator, which is symmetric in $L_2(m)$. Then
$(S_n(f)/\sqrt n)_{n\in \NN}$ converges in law to some normal law for every $f\in L_{2,0}(m)$
if and only if $P$ is uniformly ergodic.
\end{cor}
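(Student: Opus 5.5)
The plan is to deduce the corollary from Theorem \ref{normal1}. A symmetric operator is normal, so it suffices to show that when $P$ is symmetric, $\sigma(P)\subset \Lambda_{K,\beta}\cup\Delta_\eta$ for suitable $K$, $\beta$, $\eta$. Then the equivalence (ii)$\Leftrightarrow$(v) of that theorem is exactly the claim.

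Because $P$ is self-adjoint on $L_2(m)$ and a contraction there, its spectrum is real and $\sigma(P)\subset[-1,1]$. I take $\beta=1$, $K=1$ and $\eta=1$.

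1. For $z\in[0,1]$ we have $|1-z|=1-z=1-|z|$, so $z\in\Lambda_{1,1}$.
2. For $z\in[-1,0]$ we have $|1-z|=1+|z|\ge 1$, so $z\in\Delta_1$.

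Hence $[-1,1]\subset \Lambda_{1,1}\cup\Delta_1$, and the hypothesis of Theorem \ref{normal1} holds with $\beta=1\in(1/2,1]$.

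For the direction "uniformly ergodic $\Rightarrow$ CLT", no spectral argument is needed. It follows directly from Gordin--Lifshits \cite{GL}: uniform ergodicity gives $L_{2,0}=(I-P)L_{2,0}$, as recalled in the introduction, and then every centered $f$ satisfies the annealed CLT. Alternatively, it is (v)$\Rightarrow$(ii) of the theorem.

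There is no real obstacle here. The only point to check is that real functions suffice: the global CLT is stated for real centered $f$, while the spectral theory uses the complex $L_{2,0}$. Uniform ergodicity on the real space is equivalent to uniform ergodicity on its complexification, since $P$ preserves real functions. The theorem's proof also only applied \eqref{inequality} to the real functions $f+\dots+P^rf$, and $\rho_{f}$ for complex $f=u+iv$ is controlled by $\rho_u$ and $\rho_v$. So the conclusion transfers.
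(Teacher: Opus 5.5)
Your proposal is correct and follows the paper's proof: the converse direction comes from Theorem \ref{normal1} via the inclusion $\sigma(P)\subset[-1,1]\subset\Delta_1\cup\Lambda_{1,1}$, and the direct direction comes from Gordin--Lifshits \cite{GL}. Your extra check on passing between real and complex $L_{2,0}$ is sound; it is simply left implicit in the paper.
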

\begin{proof} If $P$ is uniformly ergodic, the global annealed CLT holds by \cite{GL}.
\smallskip

The converse follows from Theorem \ref{normal1}, since
 $\sigma(P) \subset [-1,0)\cup [0,1] \subset \Delta_1 \cup \Lambda_{1,1} $.
\end{proof}

{\bf Remark.} When $P$ is Harris recurrent and symmetric, Roberts and Rosenthal
\cite[Theorem 7]{RR1} proved that the global annealed "usual" CLT (the CLT with a given
formula for the variance of the limit) is equivalent to variance bounding (their
Theorem 6 shows that the Harris recurrence is implicit in their work);
by \cite{DL4} variance bounding is equivalent to uniform ergodicity. 
See also \cite{BRZ} for an improvement of Theorem 7 of \cite{RR1}. 

\begin{cor} \label{symmetric3}
Let $P$ be an ergodic Markov operator, which is symmetric in $L_2(m)$. Then
$(S_n(f)/\sqrt n)_{n\in \NN}$ converges in law to a {\rm non-degenerate} normal law for
every $0 \ne f\in L_{2,0}(m)$ if and only if $\|P^n-E\|_2 \to 0$.
\end{cor}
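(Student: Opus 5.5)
The proof splits into two implications, with Corollary \ref{symmetric1} doing most of the work for the "only if" direction.

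\emph{Sufficiency.} Assume $\|P^n-E\|_2\to 0$. Then $P$ is uniformly ergodic, so the result of Gordin and Lifshits gives, for every $f\in L_{2,0}(m)$, $f=(I-P)g$ with $g$ centered. Moreover $S_n(f)/\sqrt n\Rightarrow\mathcal N(0,\sigma^2)$ with
\[
\sigma^2=\|g\|_2^2-\|Pg\|_2^2 .
\]
It remains to show $\sigma^2>0$ whenever $f\ne0$. Since $P$ is symmetric, the spectral theorem gives
\[
\sigma^2=\int_{[-1,1]}(1-t^2)\,d\rho_g(t).
\]
The spectral gap means $\sigma(P|L_{2,0})\subset[-r,r]$ for some $r<1$, so $\sigma^2\ge(1-r^2)\|g\|_2^2>0$, because $g\ne0$ when $f\ne0$.

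\emph{Necessity.} Assume every $0\ne f\in L_{2,0}$ satisfies a non-degenerate annealed CLT. By Corollary \ref{symmetric1}, $P$ is uniformly ergodic. By \cite[Proposition 3.3]{DL4}, the spectral gap then follows once we exclude unimodular eigenvalues other than $1$. For symmetric $P$ the only candidate is $-1$. (Uniform ergodicity already makes $1$ isolated in $\sigma(P)$ on $L_{2,0}$ and not an eigenvalue there, by ergodicity.) Since $\sigma(P)\subset[-1,1]$, it suffices to show that $-1$ is not in the spectrum of $P|L_{2,0}$.

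First, $-1$ cannot be an eigenvalue. Suppose $Pf=-f$ with $f\ne 0$; then $f\in L_{2,0}$, and $f$ may be taken real since $P$ is real. In that case $S_n(f)=\sum_{k<n}f(X_k)$ and
\[
\E\big[(f(X_{k+1})+f(X_k))^2\big]=2\|f\|_2^2+2\langle Pf,f\rangle=0 .
\]
Hence $f(X_{k+1})=-f(X_k)$ a.s., so $|S_n(f)|\le|f(X_0)|$ and $S_n(f)/\sqrt n\to0$. This is a degenerate limit, a contradiction.

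Second, $-1$ cannot be a non-isolated point of the spectrum. This is the main obstacle, because there is no eigenvector and we need some $f$ with a degenerate limit. I would try to use the formula
\[
\sigma_f^2=\int\frac{1+t}{1-t}\,d\rho_f(t),
\]
valid under uniform ergodicity. It follows from $\sigma^2=\|g\|^2-\|Pg\|^2$ with $g=(I-P)^{-1}f$, so that $\sigma_f^2=\int\frac{1-t^2}{(1-t)^2}\,d\rho_f$. For this to vanish with $f\ne0$, the measure $\rho_f$ must be concentrated on $\{-1\}$. So if $-1$ is not an eigenvalue, no nonzero $f$ has $\sigma_f=0$, and the hypothesis gives nothing directly.

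I would therefore instead derive the spectral gap from a closedness or uniformity argument. From Proposition \ref{prop-variance} we have $\sigma_f\le C\|f\|_2$, and the map $f\mapsto\sigma_f^2$ is a continuous quadratic form. If the hypothesis combined with Proposition \ref{prop-tightness} yields a \emph{uniform lower bound} $\sigma_f\ge c\|f\|_2$, then we are done: taking $f$ with $\rho_f$ supported in $[-1,-1+\delta]$ gives $\sigma_f^2\le\frac{\delta}{2-\delta}\|f\|_2^2$, which forces a gap at $-1$.

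The lower bound itself would be the hardest point. The plan is to argue by contradiction with normalized $f_r$ satisfying $\sigma_{f_r}\to0$, and to try to extract a limit function with $\sigma=0$, possibly via a Baire-category argument on the closed sets $\{f:\sigma_f\ge\|f\|_2/k\}$. I expect this step to need a genuinely new idea, or the eigenvalue reduction above combined with an argument that the non-eigenvalue case cannot occur under the hypothesis.
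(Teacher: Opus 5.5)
Your proof is complete once you have excluded the eigenvalue $-1$. The ``main obstacle'' is one you created yourself.

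You correctly invoke \cite[Proposition 3.3]{DL4}. For an ergodic, uniformly ergodic Markov operator it gives $\|P^n-E\|_2\to0$ as soon as there are no unimodular \emph{eigenvalues} other than $1$.

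Your argument that $-1$ is not an eigenvalue is correct. If $Pf=-f$ with $f$ real and nonzero, then $\E[(f(X_{k+1})+f(X_k))^2]=0$. Hence $S_n(f)\in\{0,f(X_0)\}$ and $S_n(f)/\sqrt n\to0$, contradicting non-degeneracy. A symmetric $P$ has only real eigenvalues, so nothing else is needed.

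The sentence ``it suffices to show that $-1$ is not in the spectrum of $P|L_{2,0}$'' quietly replaces this by a stronger and unnecessary requirement. The case you then worry about---$-1\in\sigma(P|L_{2,0})$ without being an eigenvalue---is exactly what \cite[Proposition 3.3]{DL4} rules out, and that result uses the positivity of the Markov operator.

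Your remark that the hypothesis gives nothing in that case is right. Under uniform ergodicity, the hypothesis only says that the bounded form $f\mapsto\sigma_f^2=\int\frac{1+t}{1-t}\,d\rho_f(t)$ is positive definite. Positive definiteness does not imply coercivity. So any argument that uses only this form, such as your Baire/lower-bound plan, cannot succeed, and that part should be dropped.

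With that part removed, your proof has the same skeleton as the paper's: Corollary \ref{symmetric1}, then \cite[Proposition 3.3]{DL4}. The difference is that you replace the two citations of \cite{CL23} by direct arguments.

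\emph{Sufficiency.} You compute the Gordin--Lifshits variance spectrally, $\sigma^2=\int(1-t^2)\,d\rho_g(t)\ge(1-r^2)\|g\|_2^2>0$, instead of quoting \cite[Proposition 2.9]{CL23}.

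\emph{Necessity.} You exclude the eigenvalue $-1$ by hand instead of deducing ergodicity of $P^*P$ from \cite[Corollary 2.6]{CL23}.

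Your version is more elementary and self-contained; the paper's is shorter given \cite{CL23}. Your trick does not even need symmetry. Take any unimodular $\lambda\ne1$ with $Pf=\lambda f$. Then $\E|f(X_1)-Pf(X_0)|^2=\|f\|_2^2-\|Pf\|_2^2=0$, so $f(X_1)=\lambda f(X_0)$ a.s. Consequently the partial sums of $\mathrm{Re}\, f$ and $\mathrm{Im}\, f$ stay bounded.
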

\begin{proof} When $\|P^n-E\|_2 \to 0$, the global non-degenerate annealed CLT holds by
\cite[Proposition 2.9]{CL23}.
\smallskip

For the converse, $P$ is uniformly ergodic by Corollary \ref{symmetric1}.  Since the CLT is
non-degenerate for every $0\ne f \in L_2(m)$, by \cite[Corollary 2.6]{CL23} $P^*P$ is ergodic,
so $P$ on the complex $L_2(m)$ has no unimodular eigenvalues except 1. 
By \cite[Proposition 3.3]{DL4}, $\|P^n-E\|_2 \to 0$.
\end{proof}

{\bf Remark.} Corollaries \ref{symmetric1} and \ref{symmetric3} provide positive answers
to problems 1 and 2 of \cite{CL23}, respectively, when the chain is reversible ($P$
is symmetric).
\medskip

In the case of normal operators, we were not able to prove the analogue of Corollary 
\ref{symmetric3}, without further restrictions on the spectrum. However, when $P$ is 
normal and satisfies the global CLT we can still say something more. 

\begin{prop}
Let $P$ be an ergodic and normal Markov operator. Assume that the coresponding chain 
satisfies the global CLT.  Then, for every $f\in L_{2,0}$, and $n\in \N$, 
$d_n:= \sum_{k\ge 0} (f(X_{n+k})-Pf(X_{n+k-1}))$ converges in $L^2(\Omega,\P_m)$, 
$(d_n)_{n\in \N}$ is a stationary and ergodic sequence of martingale differences such that, 
writing $M_nf:= d_1+\ldots +d_n$, for every $\varepsilon>0$,
\begin{equation}\label{approx}
\P_m(|S_n(f)-M_n f|\ge \varepsilon \sqrt n)\underset {n\to +\infty}\longrightarrow 0\, .
\end{equation}
Moreover, 
\begin{equation}\label{variance}
\E(d_1^2)=\int_\DD \frac{1-|z|^2}{|1-z|^2}d\rho_f(z)\,.
\end{equation}
 In particular, we may have a degenerate CLT if and only if $P^*P$ is not ergodic. 
\end{prop}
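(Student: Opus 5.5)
The plan is to read $d_n$ as the $L^2(\P_m)$-limit, as $r\to\infty$, of the Gordin--Lifshits martingale differences attached to $g_r:=f+Pf+\cdots+P^rf$:
$$
D_{r,n}:=g_r(X_n)-Pg_r(X_{n-1})=\sum_{k=0}^{r}\big(P^kf(X_n)-P^{k+1}f(X_{n-1})\big).
$$
This is how I interpret the series defining $d_n$ in the statement. Throughout, I will use the bound \eqref{necessity}, which is available because the global CLT holds and Corollary \ref{corollary} applies with $\X=L_{2,0}$.

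\emph{Step 1: convergence of the series.} For $s>r$ put $h=P^{r+1}f+\cdots+P^sf$. Stationarity and \eqref{spectral} give
$$
\|D_{s,n}-D_{r,n}\|_{2,\P_m}^2=\|h\|_2^2-\|Ph\|_2^2=\int_\DD (1-|z|^2)\,|z|^{2(r+1)}\frac{|1-z^{s-r}|^2}{|1-z|^2}\,d\rho_f(z).
$$
Here $\rho_f(\{1\})=0$. The integrand vanishes on $|z|=1$. It is dominated by $4(1-|z|^2)/|1-z|^2$, which is $\rho_f$-integrable by \eqref{necessity}. It also tends to $0$ pointwise for $|z|<1$ as $r\to\infty$, uniformly in $s$. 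Dominated convergence (via the uniform bound $\sup_{s>r}$) therefore shows that $(D_{r,n})_r$ is Cauchy, so it has a limit $d_n$. Being an $L^2$-limit of stationary ergodic martingale differences of the form $D_{r,0}\circ\theta^n$, the sequence $(d_n)$ is again a stationary ergodic martingale difference sequence.

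\emph{Step 2: the variance formula.} By Step 1,
$$
\E(d_1^2)=\lim_r\int_\DD(1-|z|^2)\frac{|1-z^{r+1}|^2}{|1-z|^2}\,d\rho_f .
$$
The integrand converges pointwise to $(1-|z|^2)/|1-z|^2$, namely to $0$ on the unit circle and because $|z|^{r+1}\to0$ inside the disc. It has the same dominating function as in Step 1, so dominated convergence yields \eqref{variance}.

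\emph{Step 3: the approximation \eqref{approx}.} This is the delicate point. Proposition \ref{equivalence} requires $(I-P)f_r\to f$, but $(I-P)g_r=f-P^{r+1}f$, and $P^{r+1}f$ need not tend to $0$ when $\rho_f$ charges the unit circle. So I will instead apply the proof of Proposition \ref{equivalence} to the Cesàro means $\tilde f_r:=\frac1r\sum_{j<r}g_j$. Then $(I-P)\tilde f_r=f-\frac1r\sum_{j<r}P^{j+1}f\to f$ by the mean ergodic theorem, since $f$ is centered and $P$ is ergodic. That proof produces a martingale $M'_n$ with stationary ergodic increments $d'_n$ satisfying \eqref{approx}. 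Moreover $d'_n=\lim_r\frac1r\sum_{j<r}D_{j,n}$, and Cesàro means of a convergent sequence converge to the same limit, so $d'_n=d_n$.

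Alternatively, one can use uniqueness. If two such martingales both approximate $S_n(f)$, their difference $N_n$ satisfies $N_n/\sqrt n\to0$ in probability. On the other hand, by the martingale CLT, $N_n/\sqrt n$ converges to $\mathcal N(0,\E(d_1-d_1')^2)$, which forces $d=d'$.

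\emph{Step 4: degeneracy.} By \eqref{approx}, the limiting variance of $S_n(f)/\sqrt n$ equals $\E(d_1^2)$. By \eqref{variance} this is zero if and only if $\rho_f$ is concentrated on the unit circle, since $\rho_f(\{1\})=0$ and the integrand is positive inside the disc. For normal $P$ this is equivalent to $\|P^kf\|=\|f\|$ for all $k$, and in particular to $P^*Pf=f$. Hence some $0\ne f\in L_{2,0}$ has a degenerate CLT if and only if $P^*P$ has a nonzero centered fixed point, that is, if and only if $P^*P$ is not ergodic. For the converse direction, a centered fixed point of $P^*P$ (real parts suffice) has $\|Pf\|=\|f\|$, so normality gives $|z|=1$ $\rho_f$-a.e. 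The main obstacle I anticipate is exactly Step 3, reconciling the series limit with the approximation scheme of Proposition \ref{equivalence}.
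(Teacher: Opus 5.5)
Your proposal is correct and follows essentially the paper's route. You read $d_n$ as the Gordin difference $\lim_r\big(g_r(X_n)-Pg_r(X_{n-1})\big)$, which is what the paper's spectral computation actually treats, and you get the Cauchy property and \eqref{variance} from \eqref{spectral}, \eqref{necessity} and dominated convergence. You then get \eqref{approx} by running the proof of Proposition \ref{equivalence} on the Ces\`aro means $\tilde f_r$, exactly as the paper does, and Step 4 is a self-contained substitute for the paper's appeal to Lemma \ref{ergodic-normal}.

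The one blemish is the ``uniqueness'' alternative in Step 3. It is circular, because it presupposes that $d_1+\cdots+d_n$ already approximates $S_n(f)$, which is what you are trying to prove. Drop it; the Ces\`aro identification $d'_n=d_n$ suffices.
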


{\bf Remarks.} 1. The fact that we may have a degenerate CLT when $P^*P$ is not ergodic follows 
from  \cite[Corollary 2.8]{CL23} (which does not need uniform ergodicity for that implication).

2. The martingale difference  $d_1$ appearing in the proposition is exactly the Gordin 
martingale as described in (1.3) of \cite{DM}. For many situations for which that martingale 
difference is the "right one" to be considered to prove a CLT, see \cite{DM}.
\smallskip

\begin{proof} We first prove the convergence in $L_2(\P_m)$. Let $q>p$ be natural numbers. 
We have
\begin{align*}
\|\sum_{k=p}^q f(X_{n+k})-Pf(X_{n+k-1})\|_2^2  & = \int_\DD \big|\sum_{k=p}^q z^k\big|^2(1-|z|^2)d\rho_f (z)\\
  &  \le \int_\DD\frac{|1-z^{q+1-p}|^2(1-|z|^2)}{|1-z^2|}d\rho_f(z)\, ,
\end{align*}
hence, by \eqref{necessity} and the  Lebesgue dominated theorem, we have a Cauchy sequence converging in $L_2(\P_m)$, say to $d_n$ such that 
$\E(d_1^2)=\int_\D\frac{1-|z|^2}{|1-z|^2}d\rho_f(z)$. 
In particular, $\E(d_1^2)\neq 0$ if $P^*P$ is ergodic by the 
Lemma \ref{ergodic-normal} below.

\smallskip

Now, we prove \eqref{approx}. Let $ f_r:=\sum_{k=0}^r P^k f$, for $r\in \N_0$, and $\tilde f_r :=\frac1r\sum_{k=0}^{r-1}f_k$, for $r\in \N$.  Let us notice that we are not exactly in the situation of the proof of Proposition \ref{equivalence}, since we are not sure that $((I-P)f_r)_{r\in \N_0}$ converges to $f$. Now, using the fact that $P$ is ergodic and the identity
$$
I = (I-P)\frac{1}{r}\sum_{k=1}^r\sum_{\ell =0}^{k-1}P^kf
+\frac1r\sum_{k=0}^{r-1}P^k\, ,$$
we see that the Ces\`aro averages of $((I-P)f_r)_{r\in \N}$ congerges to $f$, and then the 
proof is similar to the one  done for Proposition \ref{equivalence}.  
\end{proof}

{\bf Remarks.} 1. In view of the proposition and of Theorem \ref{normal1}, one may
 wonder whether, for normal operators without the spectral condition, item $(iv)$ of
Theorem \ref{normal1} implies that $P$ is uniformly ergodic. 
This is not the case as one may see from the following example. Assume that $Pf=f\circ T$
 for an invertible ergodic measure preserving transformation $T$ of an atomless probability
 space. Then $P$ is unitary, and its spectrum is the full unit circle \cite{IT}, \cite{Foi}; 
item $(iv)$ of Theorem \ref{normal1} trivially holds, while $P$ is not uniformly ergodic 
since 1 is not isolated in the spectrum.

2. Since $1-|z|^2 \le 2|1-z|$ for $|z| \le 1$, the finiteness of \eqref{variance} can be 
strengthened to 
\begin{equation} \label{square-root}
\int_{\bar{D}} \frac1{|1-z|} d\rho_f(z) < \infty.
\end{equation}
In \cite[Corollary IV.7.2]{BI}, Gordin and Lifshits proved that 
if $f$  satisfies \eqref{square-root}, then $(S_n(f)/\sqrt{n})_{n\in \mathbb N}$ converges 
in distribution to a normal law.
For $P$ normal, \eqref{square-root} is equivalent  to $f \in \sqrt{I-P}L_2(m)$, and
$\sqrt{I-P}L_2(m)$ is closed if and only if $(I-P)L_2(m)$ is closed
\cite[Proposition 2.1(iii)]{DL}, which is equivalent to uniform ergodicity. 
We then have for $P$ normal, without any condition on $\sigma(P)$, that {\it $P$ is 
uniformly ergodic if and only if every $f \in L_{2,0}(m)$ satisfies \eqref{square-root}.}

\begin{lem}\label{ergodic-normal}
Let $P$ be an ergodic and normal Markov operator. The following are equivalent 
\begin{itemize}
\item [$(i)$] $P^*P$ is ergodic; 
\item [$(ii)$] for every $f\in L_{2,0}$, $\|Pf\|_2=\|f\|_2\Rightarrow f\equiv 0$;
\item [$(iii)$] for every $f\in L_{2,0}$, $\|P^nf\|_2\underset{n\to +\infty}\longrightarrow 0$;
\item [$(iv)$] for every $f\in L_{2,0}$, $\rho_f(\{z\in \C\, :\, |z|=1\})=0$. 
\end{itemize} 
\end{lem}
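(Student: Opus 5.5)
The plan is to express each condition through the spectral measure and to pass between the conditions via \eqref{spectral}. Since $P$ is normal, $\|P^nf\|_2^2=\int_\DD |z|^{2n}\,d\rho_f(z)$, and $P^*P$ commutes with $P$. This gives $(P^*Pf,f)=\|Pf\|_2^2=\int_\DD|z|^2d\rho_f$. Note also that $P^*$ is a Markov operator with invariant probability $m$, so $P^*P$ is a symmetric Markov operator preserving $m$ and $L_{2,0}$. In the proof, $L_{2,0}$ will be regarded as a subspace of the complex $L_2$ where needed. This is harmless: $\rho_{g+ih}=\rho_g+\rho_h$ for real $g,h$, because $P$ is real and normal and the spectral projections are real.

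For $(ii)\Leftrightarrow(iv)$, write $\|f\|_2^2-\|Pf\|_2^2=\int_\DD(1-|z|^2)\,d\rho_f$. If $\rho_f(\{|z|=1\})>0$ for some $f$, put $g:=\mathbf E(\{|z|=1\})f$, the spectral projection of $f$ onto the unit circle. Then $g\ne 0$, $g$ is centered (the projection commutes with $E$, which is the spectral projection onto $\{1\}$), $g$ can be taken real by the remark above, and $\|Pg\|_2=\|g\|_2$; so $(ii)$ fails. Conversely, if $\|Pf\|_2=\|f\|_2$ then $\int(1-|z|^2)d\rho_f=0$, so $\rho_f$ lives on the unit circle; if $(iv)$ holds, then $\rho_f=0$ and $f=0$. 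For $(iii)\Leftrightarrow(iv)$, dominated convergence gives $\|P^nf\|_2^2=\int|z|^{2n}d\rho_f\to\rho_f(\{|z|=1\})$. Hence $\|P^nf\|_2\to0$ for all $f$ exactly when $(iv)$ holds.

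The step I expect to need the most care is $(i)\Leftrightarrow(ii)$, i.e.\ relating ergodicity of $P^*P$, which is defined in $L_\infty$, to its fixed points in $L_{2,0}$. Since $P^*P$ is a positive contraction with $\|f\|_2^2-\|Pf\|_2^2=((I-P^*P)f,f)$, we have $\|Pf\|_2=\|f\|_2$ iff $P^*Pf=f$; the nontrivial direction uses $\|(I-P^*P)^{1/2}f\|_2^2=0$. So $(ii)$ says that $P^*P$ has no nonzero fixed point in $L_{2,0}$. For a Markov operator $Q=P^*P$ with invariant probability $m$, the fixed points in $L_2$ reduce to $L_\infty$ fixed points by the standard argument: $Q$ is a positive $L_1$--$L_\infty$ contraction, so $Qf=f$ implies $Q|f|\ge|f|$, and since $\int Q|f|\,dm=\int|f|\,dm$ this forces $Q|f|=|f|$. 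Then the fixed space is a lattice, and the truncations $f\wedge c$ are fixed as well. So ergodicity in $L_\infty$ is equivalent to the $L_2$ fixed points being the constants, which in turn is equivalent to $(ii)$. I would cite this as well known (as done for $\theta$ and $P$ via \cite{ALW}) rather than reprove it.
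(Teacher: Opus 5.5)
Your proof is correct in substance, and it is more self-contained than the paper's, which proves the lemma almost entirely by citation. There, $(i)\Leftrightarrow(ii)$ is taken from Lemma 4.5 of \cite{CL23} (valid without normality), $(i)\Leftrightarrow(iii)$ from Lemma 4.2 of \cite{CL23}, and only $(iii)\Leftrightarrow(iv)$ is derived from \eqref{spectral}, exactly as you do by dominated convergence. Your proof of $(i)\Leftrightarrow(ii)$ combines $\|f\|_2^2-\|Pf\|_2^2=((I-P^*P)f,f)$ with $I-P^*P\ge 0$, and then uses positivity and truncation, as in Lemma \ref{eigen}, to reduce the $L_2$ fixed points of $P^*P$ to $L_\infty$ ones. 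This is the standard content behind the first citation, and it likewise does not use normality.

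The genuine difference is how you close the cycle. You give a direct spectral proof of $(ii)\Leftrightarrow(iv)$ via the spectral projection onto the unit circle. The paper instead invokes the link between ergodicity of $P^*P$ and $\|P^nf\|_2\to0$; for normal $P$ this link can be seen from $\|P^nf\|_2^2=((P^*P)^nf,f)$. The paper's route is short given the references and isolates which equivalences need normality. Yours needs nothing beyond the spectral theorem and the fixed-point reduction.

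One justification is wrong as stated: for a real normal $P$ the spectral projections need not be real, and $\rho_{g+ih}\ne\rho_g+\rho_h$ in general. Writing $J$ for complex conjugation, one has $J\mathbf{E}(A)J=\mathbf{E}(\bar A)$. For example, take the cyclic shift $Pf(x)=f(x+1)$ on $\mathbb{Z}/3$ with uniform $m$, and let $\omega=\e^{2\pi i/3}$ and $u(x)=\omega^x$. Then $\rho_u=\delta_\omega$, while $\rho_{\mathrm{Re}\,u}+\rho_{\mathrm{Im}\,u}=\frac12(\delta_\omega+\delta_{\bar\omega})$. What is true, and all you actually use, is that $\mathbf{E}(A)$ is real and $\rho_{g+ih}(A)=\rho_g(A)+\rho_h(A)$ whenever $\bar A=A$. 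Since $\{|z|=1\}$ is conjugation-invariant, $g=\mathbf{E}(\{|z|=1\})f$ is indeed real for real $f$, so your step stands once the reason is corrected.

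Alternatively, you can bypass this entirely. If $g$ is complex, centered and nonzero with $\|Pg\|_2=\|g\|_2$, then since $P$ is real, $\|Pg\|_2^2=\|P\,\mathrm{Re}\,g\|_2^2+\|P\,\mathrm{Im}\,g\|_2^2$. Contractivity then forces $\|Pv\|_2=\|v\|_2$ for whichever of $v=\mathrm{Re}\,g$, $v=\mathrm{Im}\,g$ is nonzero, and that $v$ is real and centered.
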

\noindent {\bf Proof.}
The equivalence of $(i)$ and $(ii)$ follows from Lemma 4.5 of \cite{CL23} (and does not need 
$P$ to be normal), and the equivalence of $(i)$ and $(iii)$ follows from Lemma 4.2 of \cite{CL23}. 
Then, the equivalence of $(iii)$ and $(iv)$ is a straightforward 
consequence of \eqref{spectral}. \hfill $\square$

\subsection{The case of Markov operators of Ritt-type}

We say that the operator $P$ is {\it quasi-Ritt} of order $\beta\in (1/2,1]$ if there exists
$K>0$ such that $n^\beta\|P^n-P^{n+1}\|_2 \le K$, for every $n\in \NN$.

We simply say that the operator is Ritt if it is quasi-Ritt of order 1.
\medskip

The above definition is motivated by several papers; e.g. \cite{CCL1},\cite{CoLi1} and
references there.

Recall that when $P$ is quasi-Ritt
\footnote{All unidexed norms are $L_2(m)$-norms.}, $\|P^nf\| \underset{n\to\infty}\longrightarrow 0$ for every 
$f\in L_{2,0}(m)$. Indeed, since  $\sup_{n\in \N}\|P^n\|\le 1<\infty$, the set $\{f\in L_{2,0}(m)\, :\, \|P^nf\|\underset{n\to \infty}\longrightarrow  0\}$ is closed but, by the quasi-Ritt property, it contains the dense set $(I-P)L_{2,0}(m)$.
\medskip

{\bf Remark.} Rosenblatt's condition $\|P^n-E\|_2 \to 0$ implies $\|P^n-E\|_2 \le C\rho^n$
for some $\rho<1$ (e.g. \cite[Proposition 3.1]{DL4}), so in particular $P$ is Ritt.
\smallskip

\begin{prop} \label{quasi-ritt}
Assume that $P$ is quasi-Ritt of order $\beta$ for some $\beta\in (1/2,1]$ and that for every
$f\in L_{2,0}(m)$, $(S_n(f)/\sqrt n)_{n\in \NN}$ converges in distribution to some normal law.
Then $\|P^n-E\|_2 \underset{n\to +\infty}\longrightarrow 0$.
\end{prop}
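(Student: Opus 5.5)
The plan is to combine inequality \eqref{inequality} with the quasi-Ritt decay to show that $I-P$ is bounded below on $L_{2,0}(m)$. A spectral argument then rules out the remaining unimodular spectrum. Throughout, let $T:=P|_{L_{2,0}(m)}$ and work in the complexification where spectra are concerned.

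\emph{Step 1: a lower bound from the global CLT.} By Corollary \ref{corollary} with $\X=L_{2,0}(m)$, there is $C>0$ with $\|g\|_2^2-\|Pg\|_2^2\le C^2\|(I-P)g\|_2^2$ for all $g\in L_{2,0}(m)$. Apply this to $g=P^kf$ for $k=0,\dots,n-1$ and sum; the left-hand side telescopes, giving
$$
\|f\|_2^2-\|P^nf\|_2^2\le C^2\sum_{k=0}^{n-1}\|P^k(I-P)f\|_2^2 .
$$
As recalled before the statement, the quasi-Ritt property gives $\|P^nf\|_2\to 0$ for $f\in L_{2,0}(m)$. Letting $n\to\infty$ therefore yields $\|f\|_2^2\le C^2\sum_{k\ge 0}\|P^k(I-P)f\|_2^2$.

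\emph{Step 2: $I-T$ is invertible.} Split the sum at some $N$. The terms with $k<N$ contribute at most $N\|(I-P)f\|_2^2$, since $P$ is a contraction. The terms with $k\ge N$ are bounded using $\|P^k(I-P)\|_2\le Kk^{-\beta}$, so they contribute at most $K^2\big(\sum_{k\ge N}k^{-2\beta}\big)\|f\|_2^2$. This tail is finite precisely because $\beta>1/2$. Choose $N$ so that $C^2K^2\sum_{k\ge N}k^{-2\beta}\le 1/2$; this gives $\|f\|_2^2\le 2C^2N\|(I-P)f\|_2^2$. Hence $I-T$ is bounded below. Its range is dense by the mean ergodic theorem, so $I-T$ is invertible on $L_{2,0}(m)$, i.e. $P$ is uniformly ergodic and $1\notin\sigma(T)$.

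\emph{Step 3: no other unimodular spectrum.} I expect this to be the main point needing care. Suppose $\lambda\in\sigma(T)$ with $|\lambda|=1$. By the spectral mapping theorem, $\lambda^n(1-\lambda)\in\sigma(T^n(I-T))$, so $|1-\lambda|\le\|T^n(I-T)\|\le Kn^{-\beta}\to0$, which forces $\lambda=1$. Combined with Step 2, $\sigma(T)$ is a compact subset of the open unit disc, so the spectral radius of $T$ is strictly less than $1$. Hence $P$ has a spectral gap, i.e. $\|P^n-E\|_2\to0$. One could alternatively invoke \cite[Proposition 3.3]{DL4} after checking that there are no unimodular eigenvalues $\ne1$, but the direct spectral-radius argument suffices.
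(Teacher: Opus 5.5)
Your proof is correct, but it takes a longer route than the paper. The paper applies $\|g\|_2^2-\|Pg\|_2^2\le C^2\|(I-P)g\|_2^2$ to $g=P^kf$ and sums over the \emph{tail} $k\ge n$, using $\|P^kf\|_2\to 0$. This gives
\[
\|P^nf\|_2^2\le (CK)^2\sum_{k\ge n}k^{-2\beta}\,\|f\|_2^2\le \frac{L}{n^{2\beta-1}}\,\|f\|_2^2 ,
\]
so $\|P^n|_{L_{2,0}}\|_2\to 0$ at once, with an explicit polynomial rate and without spectral theory or complexification.

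You instead sum over the \emph{head} $k<n$. This only yields that $I-P$ is bounded below on $L_{2,0}(m)$, i.e.\ uniform ergodicity. You then need a second use of the quasi-Ritt hypothesis, via the spectral mapping theorem (the easy direction of Katznelson--Tzafriri, cf.\ Remark 2 after Theorem \ref{kt}), to exclude unimodular spectrum. Finally you need the spectral radius formula.

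All your steps are sound: bounded below plus dense range gives invertibility, and a compact subset of the open disc has spectral radius $<1$. Your route has a mild by-product: an explicit bound $\|(I-T)^{-1}\|\le C\sqrt{2N}$, and it lands directly on $r(T)<1$, though norm convergence of $P^n$ implies that anyway.

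Your Step 1 inequality, applied to $P^nf$ in place of $f$, already gives
\[
\|P^nf\|_2^2\le C^2\sum_{j\ge n}\|P^j(I-P)f\|_2^2 ,
\]
which is exactly the paper's tail estimate. That single substitution would make your Steps 2 and 3 unnecessary.
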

\begin{proof} By Proposition \ref{prop-variance}, there exists $C>0$ such that
$\sigma_f\le C\|f\|_2$ for every $f\in L_{2,0}(m)$.
Fix $f\in L_{2,0}(m)$ and set $g:= (I-P)f$.
By the Gordin-Lifshits CLT \cite{GL},  $(S_n(g)/\sqrt n)_{n\in \NN}$ converges in law
to a normal distribution with variance $\sigma_g^2=\|f\|^2-\|Pf\|^2$ and we infer that
\begin{equation}\label{inequality1}
\|f\|^2-\|Pf\|^2 =\sigma_g^2 \le C^2\|g\|^2= C^2\|f-Pf\|^2\, .
\end{equation}

Applying this to $P^nf$, and using the quasi-Ritt assumption, we see that
$$
\|P^nf\|^2- \|P^{n+1}f\|^2\le \frac{(CK)^2}{n^{2\beta}}\|f\|^2\, .
$$

Using that $\|P^nf\|\underset{n\to +\infty}\longrightarrow 0$ and that the series
$\sum_{n}\frac1{n^{2\beta}}$ converges, we see that there exists $L>0$ such that
$$
\|P^nf\|^2\le \frac{L}{n^{2\beta-1}}\|f\|^2\, .
$$
Hence, $\|P^n_{|L_{2,0}}\|^2\le \frac{L}{n^{2\beta-1}}$ and  the proof is finished.
\end{proof}

\medskip

 {\bf Remarks.} 1. Instead of the quasi-Ritt property, it is enough to assume that
there exists $\varphi$ such that $\sup_{n\in \NN}\varphi(n) \|P^n-P^{n+1}\|<\infty$  and
$\sum_{n\in \NN}\frac1{\varphi^2(n)}<\infty$. 

2. Note that normality of the operator $P$ is not assumed.

\medskip

{\bf Definition.} A centered $0 \ne f \in L_2(m)$ satisfies the {\it  Functional
$L_2$-normalized CLT} if, in $(\Omega,\mathbb P_m)$,
\begin{equation} \label{fclt}
\Big\{	\frac1{\sigma_n(f)} \sum_{k=1}^{[nt]} f(X_k), \ t\in[0,1]\Big\}
\overset{\mathcal D}\to{W(t) },
\end{equation}
where $W(t), \ t \ge 0$, is the standard Brownian motion.

\begin{theo} \label{symmetric}
Let $P$ with invariant probability $m$  be  symmetric on $L_2(m)$ and ergodic.
Assume that $P$ is positive definite. Then the following are equivalent:

(i) Every $f \in L_{2,0}(m)$ satisfies the annealed CLT.

(ii) Every non-zero $f \in L_{2,0}(m)$ satisfies a non-degenerate annealed CLT.

(iii) $P$ is uniformly ergodic on $L_2(m)$.

(iv) $\|P^n-E\|_2 \to 0$.

(v) Every $0 \ne f \in L_{2,0}(m)$ satisfies the {\em functional} $L_2$-normalized CLT.

\noindent
If either condition holds, then:

(vi) Every $0 \ne f \in L_{2,0}(m)$ satisfies the $L_2$-normalized CLT.

\noindent
When $P$ is also Harris recurrent, then (vi) is equivalent to all the other conditions.
\end{theo}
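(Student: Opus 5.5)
Since $P$ is symmetric and positive definite, $\sigma(P|L_{2,0})\subset[0,1]$, which is what makes everything collapse. The plan is to prove the cycle (i)$\Leftrightarrow$(iii)$\Leftrightarrow$(iv), then (iv)$\Rightarrow$(ii)$\Rightarrow$(i), then bring in (v) and (vi).

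\textbf{The equivalences (i)$\Leftrightarrow$(iii)$\Leftrightarrow$(iv).} Corollary \ref{symmetric1} gives (i)$\Leftrightarrow$(iii). For (iii)$\Rightarrow$(iv), uniform ergodicity means $I-P$ is invertible on $L_{2,0}$. Hence $1\notin\sigma(P|L_{2,0})$, so $\sigma(P|L_{2,0})\subset[0,1-\delta]$ for some $\delta>0$. The spectral radius of $P|L_{2,0}$ is then at most $1-\delta$, and $\|P^n-E\|_2\le(1-\delta)^n\to0$. Positivity matters here: it rules out the point $-1$, which is the only obstruction in Corollary \ref{symmetric3}. The implication (iv)$\Rightarrow$(iii) is trivial.

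\textbf{(ii) and the functional CLT (v).} For (iv)$\Rightarrow$(ii), use \cite[Proposition 2.9]{CL23} as in the proof of Corollary \ref{symmetric3}; alternatively, note that $P^*P=P^2$ has spectrum in $[0,(1-\delta)^2]$ on $L_{2,0}$, hence is ergodic. Then (ii)$\Rightarrow$(i) is trivial. For (iv)$\Rightarrow$(v), write $f=(I-P)g$ with $g\in L_{2,0}$ and use the Gordin--Lifshits martingale decomposition. The functional CLT for stationary ergodic martingale differences gives the invariance principle with normalization $\sigma\sqrt n$, where $\sigma^2=\|g\|^2-\|Pg\|^2>0$ by non-degeneracy. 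Since $\sigma_n(f)^2/n\to\sigma^2$ (the coboundary part is bounded in $L_2$), we may replace $\sigma\sqrt n$ by $\sigma_n(f)$. Finally, (v)$\Rightarrow$(vi) is immediate by taking $t=1$.

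\textbf{The converse (v)$\Rightarrow$(iv).} The main obstacle is deriving (iv), or (iii), from (v), or from (vi) under Harris recurrence. I would argue by contradiction. If $P$ is not uniformly ergodic, then $1$ is an accumulation point of $\sigma(P)$, and we can build $f\in L_{2,0}$ with spectral measure $\rho_f$ on $[0,1)$ concentrated near $1$. By the spectral formula \eqref{spectral},
\[
\sigma_n(f)^2=\int_{[0,1)}\Big|\sum_{k=1}^n t^k\Big|^2\,d\rho_f(t),
\]
so by choosing $\rho_f$ with suitable mass near $1$ one can make $\sigma_n(f)^2$ regularly varying with an index in $(1,2)$. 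The functional $L_2$-normalized CLT forces the limit Brownian motion to have variance $t$ at time $t$. But $\sigma_{[nt]}(f)^2/\sigma_n(f)^2\to t^{\alpha}$ with $\alpha\ne1$, which contradicts (v). This is the step I expect to be hardest: one must verify that the chosen measure is realized by some $f$ (via the spectral theorem, since $\sigma(P)$ accumulates at $1$), and control the regular variation. For the Harris-recurrent case, I would invoke known results (e.g.\ \cite{BRZ} together with \cite{DL4}) stating that the $L_2$-normalized CLT for all $f$ forces variance bounding, hence uniform ergodicity. This gives (vi)$\Rightarrow$(iii) there.
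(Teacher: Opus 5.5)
Most of your plan is fine. This includes (i)$\Leftrightarrow$(iii) via Corollary~\ref{symmetric1}, your direct spectral proof of (iii)$\Rightarrow$(iv) (simpler than the paper's route through the Ritt property and Proposition~\ref{quasi-ritt}), and (iv)$\Rightarrow$(ii)$\Rightarrow$(i), (iv)$\Rightarrow$(v)$\Rightarrow$(vi). For the functional statement, add that $\max_{k\le n}|Pg(X_k)|/\sqrt n\to0$ in probability, which follows from stationarity and $Pg\in L_2$.

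The genuine gap is (v)$\Rightarrow$(iv). First, your variance formula is wrong. \eqref{spectral} computes $\|\sum_{k=1}^nP^kf\|_2^2=\int\sum_{j,k}t^{j+k}\,d\rho_f(t)$, whereas $\sigma_n(f)^2=\sum_{j,k=1}^n\langle P^{|j-k|}f,f\rangle=\int_{[0,1]}\sum_{j,k=1}^nt^{|j-k|}\,d\rho_f(t)$. Its integrand is of order $\min(n^2,n/(1-t))$, not $\min(n^2,(1-t)^{-2})$. More seriously, you cannot prescribe $\rho_f$: the spectral theorem only lets you assign masses to spectral subsets. If $\sigma(P)$ accumulates at $1$ only along a very sparse sequence, then for every $f$ the sequence $\sigma_n(f)^2$ behaves like $An^2+Bn$ over long stretches of $n$, so it is never regularly varying with index in $(1,2)$. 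What you actually need is merely some $f$ violating $\sigma_{[nt]}(f)^2/\sigma_n(f)^2\to t$, which your convergence-of-types argument correctly extracts from (v). Such an $f=\sum_kc_kf_k$ can be built, taking $f_k$ in the ranges of spectral projections of tiny intervals around very sparse spectral points $s_k\uparrow1$. But that construction is precisely the missing work.

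The paper avoids any construction. From (v) it gets $\sigma_n(f)^2=nL(n)$ with $L$ slowly varying for \emph{every} $f$ (by \cite[Theorem 1]{PU}; your convergence-of-types remark gives the same). Hence $\|\frac1n\sum_{k=1}^nP^kf\|_2\le C_fn^{-\epsilon}$, via the proof of \cite[Theorem 3.9]{CL23}. For positive definite $P$ this is immediate from $t^{j+k}\le t^{|j-k|}$ on $[0,1]$, which gives $\|\sum_{k=1}^nP^kf\|_2\le\sigma_n(f)$. The Banach--Steinhaus theorem then turns this pointwise rate into $\|\frac1n\sum_{k=1}^nP^k|_{L_{2,0}}\|_2\to0$, i.e.\ uniform ergodicity.

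Your Harris-recurrent step also fails as stated. \cite{BRZ}, \cite{RR1} and \cite{DL4} concern the $\sqrt n$-normalized CLT and variance bounding. Condition (vi) does not even give tightness of $S_n(f)/\sqrt n$, since $\sigma_n(f)^2/n$ may diverge, so those results do not yield (vi)$\Rightarrow$(iii). The paper instead uses \cite[Proposition 4.11]{CL23}, which is specific to the $L_2$-normalized CLT for Harris recurrent chains.
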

\begin{proof} Clearly (iv) implies (iii), and (iii) implies (i) by Gordin-Lifshits \cite{GL}.

(iv) implies (ii) by \cite[Proposition 2.9]{CL23}, since $P$ is symmetric. Obviously (ii)
implies (i).

For the sake of completeness we prove that $P$ satisfies $ \sup_n n\|P^n-P^{n+1}\|_2< \infty$
(shown implicitely by Stein \cite{St}). Since $P$ is positive definite,
$\sigma(P) \subset [0,1]$. Since
$$
\max_{t \in [0,1]} t^n(1-t)=\Big(\frac{n}{n+1}\Big)^n\Big(1 -\frac{n}{n+1}\Big)=
\Big(\frac{n}{n+1}\Big)^n \cdot \frac1{n+1} \le \frac1\e \cdot \frac1{n+1}\ ,
$$
the spectral theorem yields
$$
\|P^n(I-P) f\|^2=  \int_0^1 (t^n-t^{n+1})^2 \langle E(dt)f,f\rangle \le
\frac1{\e^2} \cdot \frac1{(n+1)^2} \|f\|^2;
$$ 
hence $n\|P^n-P^{n+1}\| \le \frac1{\e}$.

Now (i) implies (iv) by Proposition \ref{quasi-ritt}.

By \cite[Theorem 1]{PU}, (v) implies that any $0\ne f \in L_{2,0}(m)$ satisfies
	$\sigma_n^2(f)= nL(n)$ with some $L(n)$ slowly varying.
	Fix $\gamma \in (0,1)$ and put $\epsilon=(1-\gamma)/2$.
It was shown in the proof of \cite[Theorem 3.9]{CL23} that
$\|\frac1n\sum_{k=1}^n P^k f\| \le C_f/n^\epsilon$, and by the Banach-Steinhaus
theorem, $P$ is uniformly ergodic. Thus (v) implies (iii).

When (iv) holds, the global $L_2$-normalized CLT (vi) follows from
\cite[Proposition 2.9]{CL23}, and by \cite{GL} $\frac1n\sigma_n^2(f)$ converges for any
centered $f \in L_2$; for $f \ne 0$ the limit is not zero by \cite[Proposition 2.9]{CL23}.
By \cite{LPP}, these two properties yield \eqref{fclt}. See also \cite[Theorem 14.9]{MPU}.
Hence (iv) implies (v).
\smallskip

When $P$ is also Harris recurrent, (vi) implies (iv) by \cite[Proposition 4.11]{CL23}.
\end{proof}

{\bf Remarks.} 1. The equivalences of Theorem \ref{symmetric} hold if $P$ is symmetric
and $-1 \notin \sigma(P)$.

2. The equivalence of (i) and (iii), without any assumption on the spectrum, is in Corollary
\ref{symmetric1}.

3. The equivalence of (ii) and (iv), without any assumption on the spectrum, is in Corollary
\ref{symmetric3}.



\begin{prop} \label{dedecker}
There exists an aperiodic symmetric Harris operator $P$ for which the global annealed CLT fails,
but every $f \in L_{p,0}(m)$, $p>2$, satisfies the annealed CLT.
\end{prop}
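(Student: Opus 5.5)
The plan is to build an explicit symmetric chain that is a rank-one perturbation of a multiplication operator, so that both the failure of uniform ergodicity and the CLT criterion reduce to integrability statements about one function $a$. Take $S=[0,1]$ with Lebesgue measure $m$. Fix a measurable $a:S\to(0,1]$ with $\operatorname*{ess\,inf} a=0$ and $1/a\in L_q(m)$ for every $q<\infty$; I would use $a(x)=1/(1+\log(1/x))$. Put $\bar a:=\int a\,dm$ and define
$$
P(x,dy):=(1-a(x))\,\delta_x(dy)+a(x)\frac{a(y)}{\bar a}\,m(dy).
$$
This kernel is symmetric with respect to $m$, so $m$ is invariant. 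Every point jumps with positive probability to a fresh point distributed as $a\,dm/\bar a$, which gives a minorization $P(x,\cdot)\ge a(x)\nu(\cdot)$. From this I expect $P$ to be Harris recurrent, aperiodic (it has holding) and ergodic. I would verify these properties directly from the regeneration at jump times.

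\emph{Failure of the global CLT.} On $L_2(m)$ we have $Pf=(1-a)f+a\,\langle f,a\rangle/\bar a$. So $I-P=M_a-R$, where $M_a$ is multiplication by $a$ and $R$ has rank one. Hence $I-P$ restricted to $L_{2,0}$ is a rank-one perturbation of $M_a$. Since $\operatorname*{ess\,inf}a=0$, the operator $M_a$ is not bounded below, and $I-P$ is not bounded below on $L_{2,0}$ either. Concretely, I would take normalized centered $f$ supported on $\{a<\varepsilon\}$ and orthogonal to $a$, which gives $\|(I-P)f\|_2\le\varepsilon\|f\|_2$. Thus $P$ is not uniformly ergodic, and Corollary \ref{symmetric1} shows that the global annealed CLT fails.

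\emph{CLT on $L_{p,0}$.} For symmetric $P$ I would use the Kipnis--Varadhan theorem (equivalently Gordin--Lifshits \cite[Corollary IV.7.2]{BI}, i.e.\ condition \eqref{square-root}). It gives the annealed CLT whenever $f\in\sqrt{I-P}\,L_2$, i.e.\ whenever $\int\frac{1}{1-t}\,d\rho_f(t)<\infty$. The main step is to compute this quantity explicitly. Solving $(I-P)g=f$ gives $a g-a\langle g,a\rangle/\bar a=f$. For centered $f$ this is solved by $g=f/a+\text{const}$, provided $f/a\in L_2$, and in general I would argue through the resolvent $(I-rP)^{-1}$ as $r\uparrow1$. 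The target identity is
$$
\int\frac{d\rho_f(t)}{1-t}=\int\frac{f^2}{a}\,dm .
$$
Symmetry of $P$ is what allows the limiting argument: the quantity $\langle (I-rP)^{-1}f,f\rangle$ is monotone in $r$. I expect the justification of this identity, including the effect of the rank-one term and of the constant, to be the main obstacle. A fallback is regeneration: the excursion contribution at $x$ is $f(x)G_x$, with $G_x$ geometric of mean $1/a(x)$, and visits at $x$ occur with frequency proportional to $a(x)\,m(dx)$. This gives second moment $\asymp\int f^2/a\,dm$.

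For $f\in L_{p,0}$ with $p>2$, Hölder's inequality with exponents $p/2$ and $q=p/(p-2)$ gives $\int f^2/a\le\|f\|_p^2\|1/a\|_q<\infty$. This yields the annealed CLT for every such $f$, which completes the construction.
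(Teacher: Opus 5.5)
Your construction is correct, and it proves the statement by a route quite different from the paper's. The paper takes Bradley's reversible countable-state chain with $\alpha(n)\le\beta(n)\le \mathrm{e}^{-\sqrt n}$ and reads off a failing $f\in L_{2,0}(m)$ from Bradley's construction. It then gets the CLT on $L_{p,0}$, $p>2$, from the $\alpha$-mixing rate through \cite[Proposition 3.5]{CL23}. You instead build an explicit chain whose Dirichlet form is $\langle (I-P)g,g\rangle=\int a\,(g-c_g)^2\,dm$, with $c_g:=\int ag\,dm/\bar a$. Both non-uniform ergodicity (hence failure of the global CLT, via Corollary \ref{symmetric1}) and the $L_{p,0}$ CLT then reduce to properties of $a$.

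The paper's route is short given the literature and works on a countable state space. It also shows that even $\beta$-mixing at rate $\mathrm{e}^{-\sqrt n}$ does not give the global CLT. Yours is elementary and gives the exact variance $\sigma_f^2=\int f^2(2-a)/a\,dm$. Your $P$ is also positive definite, since $\langle Pg,g\rangle=\int(1-a)g^2\,dm+\langle g,a\rangle^2/\bar a\ge 0$, so it yields the subsequent corollary without passing to $Q^2$. Moreover, take $a(x)=1/(1+\log(1/x))$ and apply Young's inequality $uv\le \mathrm{e}^u+v\log v-v$ with $u=\frac12\log(1/x)$, $v=f^2$. This gives $\int f^2/a\,dm<\infty$ whenever $\int f^2\log^+|f|\,dm<\infty$. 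On the other hand, $P^n(x,\{x\})\ge (1-a(x))^n$ and $\operatorname{ess\,inf}a=0$ rule out geometric ergodicity. So your example even answers in the negative the question raised in Remark~2 after the proposition.

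The step you flag as the main obstacle is not needed for the statement itself. Since $1/a\in L_q(m)$ for all $q<\infty$, H\"older gives $f/a\in L_2(m)$ for every $f\in L_{p,0}(m)$, $p>2$. You already noted $(I-P)(f/a)=f$, so $f$ is a coboundary and \cite{GL} applies directly. The sharper quantity $\int f^2/a\,dm$ is needed only for the $L^2\log L$ remark, and the Dirichlet form gives it without any resolvent limit: for centered $f$,
$\int\frac{d\rho_f(t)}{1-t}=\sup_g\{2\langle f,g\rangle-\langle (I-P)g,g\rangle\}=\sup_h\int (2fh-ah^2)\,dm=\int \frac{f^2}{a}\,dm$.
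For the middle equality, substitute $h=g-c_g$ using $\int f\,dm=0$, and use $\int a(h-c_h)^2\,dm\le\int ah^2\,dm$.

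Three small corrections:
\begin{itemize}
\item Monotonicity of $r\mapsto\langle (I-rP)^{-1}f,f\rangle$ comes from $\sigma(P)\subset[0,1]$, i.e.\ positive definiteness, not from symmetry alone.
\item Aperiodicity should be derived from the minorization $P(x,\cdot)\ge\frac12\nu$ on $\{a\ge\frac12\}$, a set of positive $\nu$-measure. Holding at points does not suffice, since those points are $m$-null.
\item You should set $a(0):=1$ (or take $S=(0,1]$). Your formula gives $a(0)=0$, which would make $0$ absorbing.
\end{itemize}
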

\begin{proof}
Putting $\zeta_n=\e^{-\sqrt{n}}$ in Bradley's \cite[Theorem 3.3]{Br4}, we
obtain a reversible positive recurrrent chain on a countable state space with mixing
coefficients $\alpha(n) \le \beta(n) \le \e^{-\sqrt{n}}$. Its Markov operator $P$ is
symmetric aperiodic,
but by the construction in \cite{Br4}, some $f \in L_{2,0}(m)$ does not satisfy the
annealed CLT; hence, by \cite{GL}. $P$ is not uniformly ergodic. With the given rate
$\alpha(n) \le \e^{-\sqrt{n}}$, \cite[Proposition 3.5]{CL23} yields the annealed CLT
for every $f \in L_{p,0}(m)$, $p>2$.
\end{proof}

{\bf Remarks.} 1. The operator $P$ in Proposition \ref{dedecker} is not geometrically
ergodic, since in the symmetric case geometric ergodicity implies $\|P^n-E\|_2 \to 0$
by \cite[Theorem 2]{RT}.

2. By \cite[Proposition 3.2]{CL23}, geometric ergodicity implies that every centered real
$f$ with $\int |f|^2 \log^+|f|dm< \infty$ satisfies the annealed CLT. We do not know
if this condition implies geometric ergodicity when $P$ is symmetric Harris aperiodic.

3. When $P$ is normal in $L_2(m)$ and ergodic, $\|P^n-E\|_2 \to 0$ if (and only if)
the mixing coefficients $\alpha(n)$ tend to 0 (at least) exponentially fast, by
\cite[Theorem 3.3]{CL23}.  H\"aggstr\"om's example \cite{Hag} is geometrically ergodic
with $\lim_n \|P^n-E\|_2 >0$, so it is not normal in $L_2(m)$.

4. In the proof of Proposition \ref{dedecker}, we could have used Bradley's
\cite[Example 1]{Br3} with  the parameters $a=1/2$, $q=1$ and $b=c=0$.
\medskip

\begin{cor}
There exists an aperiodic symmetric positive definite Harris operator $P$ for which
the global annealed CLT fails, $P$ is not uniformly ergodic in $L_2(m)$, but every
$f \in L_{p,0}(m)$.  $p>2$, satisfies the annealed CLT.
\end{cor}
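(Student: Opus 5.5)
The natural candidate is $Q:=P^2$, built from the operator $P$ of Proposition \ref{dedecker}. That $P$ comes from Bradley's reversible chain: it is symmetric, aperiodic and Harris, with $\beta(n)\le \e^{-\sqrt n}$, and some $f\in L_{2,0}(m)$ fails the annealed CLT. The two-step operator $Q$ is symmetric, and it is positive definite because $\langle P^2f,f\rangle=\|Pf\|_2^2\ge 0$. It still has invariant probability $m$. Its chain is the subsequence $(X_{2n})$ of the original chain.

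\emph{Harris and aperiodicity for $Q$.} The original chain lives on a countable state space and is irreducible, positive recurrent and aperiodic. The two-step chain of an aperiodic irreducible countable chain is again irreducible and aperiodic, and it is positive recurrent with the same $m$. So $Q$ is Harris and aperiodic. In particular $Q$ is ergodic, as Theorem \ref{symmetric} requires.

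\emph{Failure of uniform ergodicity for $Q$.} I plan to argue by contradiction through the spectrum. Suppose $Q$ were uniformly ergodic. Then Theorem \ref{symmetric} applies to $Q$, since it is symmetric, ergodic and positive definite, and condition (iii) gives (iv): $\|Q^n-E\|_2\to 0$. Hence $\|P^{2n}-E\|_2\to0$, and since $\|P^{2n+1}-E\|_2\le \|P^{2n}-E\|_2$ we get $\|P^n-E\|_2\to 0$. This makes $P$ uniformly ergodic, contradicting Proposition \ref{dedecker}. Therefore $Q$ is not uniformly ergodic, and Corollary \ref{symmetric1} (or Theorem \ref{symmetric}) shows the global annealed CLT fails for $Q$.

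\emph{The CLT for $f\in L_{p,0}$, $p>2$, under $Q$.} The mixing coefficients of the chain $(X_{2n})$ satisfy $\alpha_Q(n)=\alpha(2n)\le \e^{-\sqrt{2n}}\le \e^{-\sqrt n}$. I will invoke \cite[Proposition 3.5]{CL23} in exactly the way the proof of Proposition \ref{dedecker} does, now applied to $Q$. This yields the annealed CLT for $Q$ for every $f\in L_{p,0}(m)$ with $p>2$.

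The main point to verify is that the hypotheses of \cite[Proposition 3.5]{CL23} are only a rate condition on $\alpha(n)$ together with stationarity and ergodicity. Passing to the two-step chain preserves all of these, so the argument above goes through.
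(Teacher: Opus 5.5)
Your proposal is correct and is essentially the paper's proof, with the roles of $P$ and $Q$ swapped: you square Bradley's operator from Proposition \ref{dedecker}, get positive definiteness from symmetry, pass the failure of uniform ergodicity to the square through Theorem \ref{symmetric} and Corollary \ref{symmetric1}, and get the $L_{p,0}$ CLT from $\alpha_{Q}(n)=\alpha(2n)\le \e^{-\sqrt{2n}}$ and \cite[Proposition 3.5]{CL23}. Only the inequality $\alpha_Q(n)\le\alpha(2n)$ is needed, and it is immediate because the past and future $\sigma$-fields of $(X_{2k})$ are contained in those of $(X_k)$; the paper instead cites \cite[Formula (13)]{CL23} for the equality.
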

\begin{proof} Let $Q$ be the operator of the proof of Proposition \ref{dedecker} and
put $P=Q^2$.  By aperiodicity of $Q$, the positive definite $P$ is aperiodic.
Since $\lim_n\|Q^n-E\|_2 >0$, also $\lim_n\|P^n-E\|_2 > 0$, Then, by Theorem
\ref{symmetric}, $P$ does not have the global CLT, and by \cite{GL} $P$ is not
uniformly ergodic.  By \cite[Formula (13)]{CL23}, the mixing coefficients satisfy
$\alpha_P(n) =\alpha_Q(2n) \le \e^{-\sqrt{2n}}$, so \cite[Proposition 3.5]{CL23}
yields the annealed CLT for every $f \in L_{p,0}(m)$, $p>2$.
\end{proof}

%
	
\medskip

Cohen and Lin \cite[Theorem 2.4]{CoLi1} gave sufficient conditions for a contraction on a
complex Hilbert space to be quasi-Ritt. They define quasi-Stolz regions $D(a,b)$ in the closed
unit disk, which generalize Stolz regions\footnote{A Stolz region is the closed convex hull of the
point $1$ and a centered disk of radius $\rho<1$.} (see \cite{CoLi1}).  By \cite[Lemma 5.2]{CoLi1},
we have
$$
D(\frac1\beta,b) \subset \{z \in \bar{\mathbb D}: |1-z| \le 2^\beta C(1-|z|)^\beta\}=
\Lambda_{2^\beta C,\beta}.
$$
Let us show that we have a converse inclusion.

\begin{prop}
\begin{itemize}
\item [$(i)$] For every $\beta\in (1/2,1)$ and $K>0$,
$\Lambda_{K,\beta}\subset D(\frac1{\beta}, \frac1{2K^{1/\beta}})$ ;
\item [$(ii)$] For every $K>0$, $\Lambda_{K,1}\subset D(1, \frac{K+1}{2K})$.
\end{itemize}
\end{prop}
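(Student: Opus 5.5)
The plan is to unwind the definition of the quasi-Stolz regions of \cite{CoLi1} in polar coordinates centred at $1$, and then compare the defining inequality directly with the one defining $\Lambda_{K,\beta}$. I will write $z=1-re^{i\theta}$ with $r=|1-z|\ge 0$ and $|\theta|\le\pi/2$ (any point of $\DD$ has this form). The identity I will use throughout is
$$
|z|^2=1-2r\cos\theta+r^2,\qquad\text{so}\qquad 1-|z|^2=r(2\cos\theta-r).
$$
Combined with $1-|z|\le 1-|z|^2\le 2(1-|z|)$, this lets me pass between $1-|z|$ and the quantity $r(2\cos\theta -r)$ at the cost of a factor at most $2$. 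I expect that factor to be the source of the constant $\frac12$ in $\frac1{2K^{1/\beta}}$.

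For (i), take $z\in\Lambda_{K,\beta}$ with $z\ne 1$ (the point $z=1$ is trivially in both sets). Raising the defining inequality to the power $1/\beta$ gives $r^{1/\beta}\le K^{1/\beta}(1-|z|)$. By the identity above this yields $r^{1/\beta}\le K^{1/\beta}\,r(2\cos\theta-r)$, hence
$$
\frac{1}{2K^{1/\beta}}\,r^{\frac1\beta-1}\le \cos\theta-\frac r2\le\cos\theta .
$$
I would then check that this is (or implies) the inequality describing $D(\frac1\beta,b)$ in \cite{CoLi1} with $b=\frac1{2K^{1/\beta}}$. I expect that description to be of the form ``$b\,|1-z|^{\alpha-1}\le\cos\theta$'' (possibly with the $-r/2$ term kept), together with $|z|\le1$, which is automatic here. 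If the definition in \cite{CoLi1} keeps $\cos\theta - r/2$, the displayed inequality is exactly what is needed; if it uses $\cos\theta$ alone, the displayed inequality is stronger.

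For (ii), $\beta=1$, the exponent $r^{\frac1\beta-1}$ disappears, and the previous argument would only give $\cos\theta\ge\frac1{2K}+\frac r2$. To reach the stated constant $\frac{K+1}{2K}$, I would keep the $r/2$ term and exploit $r\le K(1-|z|)$ more carefully. Since $1-|z|\le 1$, we have $r\le K$. Moreover $|z|\ge 1-r$ together with $r\le K(1-|z|)$ gives $(1+K^{-1})\,r\le\ldots$, and combining these should turn the bound into $\cos\theta\ge \frac{K+1}{2K}\,r$, or whatever linear inequality defines $D(1,b)$. This is the step where I expect the main obstacle: matching the exact form of the region $D(\alpha,b)$ from \cite{CoLi1} and tracking the constants precisely. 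The cleanest route is probably to bypass the lossy estimate $1-|z|^2\le2(1-|z|)$ and instead use $1-|z|=\frac{r(2\cos\theta-r)}{1+|z|}$ with $1+|z|\le 2-(1-|z|)$, solving the resulting inequality for $\cos\theta$ in terms of $r$. Finally I would verify the boundary and degenerate cases ($\theta=\pm\pi/2$, and small or large $r$) separately.
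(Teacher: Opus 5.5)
For (i) your computation is correct, and it is the paper's argument in polar coordinates. Write $z=x+iy=1-u+iy$. The paper uses $|1-z|\ge |y|$ and $1-|z|\le 1-|z|^2=2u-u^2-y^2\le 2u$ to get $y^2\le K^2(2u)^{2\beta}$, i.e.\ $\frac{|y|^{1/\beta}}{2K^{1/\beta}}\le 1-x$, and declares (i) proved at that point. So this is the inequality it checks for membership in $D(\frac1\beta,\frac1{2K^{1/\beta}})$. Multiplying your inequality by $r$ gives $\frac{r^{1/\beta}}{2K^{1/\beta}}\le r\cos\theta-\frac{r^2}{2}\le 1-x$, and $|y|\le r$ yields exactly the paper's bound. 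Your guessed form with $|1-z|$ in place of $|\mathrm{Im}\,z|$ is stronger and is also implied. For (i), then, the only thing missing is to write down the definition of $D(a,b)$ and this last step.

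For (ii) there is a genuine gap: what you wrote is not yet an argument, and its target is the wrong one.
\begin{itemize}
\item The bound $|z|\ge 1-r$ together with $r\le K(1-|z|)$ gives nothing. It is $|z|\ge r-1$ that gives $r\le\frac{2K}{K+1}$, and feeding this into $\cos\theta\ge\frac1{2K}+\frac r2$ yields only $\cos\theta\ge(\frac12+\frac{K+1}{4K^2})r$. That falls short of $\frac{K+1}{2K}r$ when $K>1$.
\item In the exact route, $r\le K(1-|z|)$ reads $2\cos\theta\ge r+\frac{1+|z|}{K}$. So you need a \emph{lower} bound on $1+|z|$, namely $1+|z|\ge 1+|1-r|=\max(2-r,r)$. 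Your relation $1+|z|\le 2-(1-|z|)$, combined with $1-|z|\ge r/K$, bounds $1+|z|$ from above, which is the wrong direction. With the lower bound you do obtain $\cos\theta\ge\frac{K+1}{2K}r$.
\item That inequality is equivalent to $\frac{K+1}{2K}|1-z|^2\le 1-x$: the closed disc of radius $\frac{K}{K+1}$ internally tangent to $\mathbb T$ at $1$. It is sharp only at the leftmost point $-\frac{K-1}{K+1}$ of $\Lambda_{K,1}$. It has discarded the term $\frac1K$ that keeps $\Lambda_{K,1}$ inside a sector at $1$, so it carries no Stolz-type information.
\end{itemize}

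The paper's indication for (ii) is instead to rerun the Cartesian computation keeping the $-y^2$ term:
$|y|\le|1-z|\le K(1-|z|)\le K(1-|z|^2)\le K(2u-y^2)$, i.e.\ $\frac{|y|}{2K}+\frac{y^2}{2}\le 1-x$. The two coefficients add up to $\frac{K+1}{2K}$.

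Your worry about matching the constant is justified; you must settle it against the exact definition of $D(1,b)$ in \cite{CoLi1} before (ii) can be closed. For $r>0$ one has $1-re^{i\theta}\in\Lambda_{K,1}$ if and only if $r\le K$ and $\cos\theta\ge\frac1K+\frac{(K^2-1)r}{2K^2}$. Hence on $\Lambda_{K,1}$ the ratio $|y|/(1-x)=|\tan\theta|$ comes arbitrarily close to $\sqrt{K^2-1}$. Consequently a linear bound $b|y|\le 1-x$ on $\Lambda_{K,1}$ forces $b\le (K^2-1)^{-1/2}$, which rules out $b=\frac{K+1}{2K}$ once $K\ge 2$. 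The inequality obtained above gives $\frac{1}{2K}|y|\le 1-x$, and, since $|y|\le 1$, also $\frac{K+1}{2K}y^2\le 1-x$. It does not give $\frac{K+1}{2K}|y|\le 1-x$.
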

\begin{proof}  Let $z\in \Lambda_{K,\beta}$ for some $K>0$ and some $\beta\in (1/2,1]$.
Write $z=x+iy=1-u+iy$. Then, $|1-z|^2 =(1-x)^2+y^2\ge y^2$ and $1-|z|^2 = 2u-u^2-y^2\le 2u$.
By assumption on $z$, since $1-|z|\le 1-|z|^2$, we infer that $y^2 \le K^2(2u)^{2\beta}$.
Hence, $\frac{|y|^{1/\beta}}{ 2K^{1/\beta}}\le 1-x$, and item $(i)$ follows. Item $(ii)$
follows by similar computations using the bound $1-|z|^2  \le 2u-y^2$.
\end{proof}

\begin{prop} \label{normal}
Let $P$ be a  normal operator on a complex Hilbert space. Then:

(i) $P$ is quasi-Ritt of order $\beta \in(\frac12,1)$ if and only if
$\sigma(P) \subset D(\frac1\beta,b)$ for some $b>0$.

(ii) $P$ is Ritt (quasi-Ritt of order 1) if and only if $\sigma(P)$ is contained in a Stolz region.
\end{prop}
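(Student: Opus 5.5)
The strategy is to reduce everything to a scalar statement about the spectrum, using the spectral theorem for normal operators. For normal $P$ with spectral measure $\mathbf E$, \eqref{spectral} gives
$$
\|P^n-P^{n+1}\|=\sup_{z\in\sigma(P)}|z|^n|1-z| .
$$
Indeed, the norm of a function of a normal operator equals the sup of $|g|$ over the spectrum, since $g$ is continuous. Hence quasi-Ritt of order $\beta$ is equivalent to
$$
\sup_n\sup_{z\in\sigma(P)} n^\beta |z|^n|1-z|<\infty .
$$
We may also use that $\sigma(P)\subset\DD$, because $P$ is a contraction (being quasi-Ritt forces power boundedness, which for normal operators means $\|P\|\le 1$). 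By the previous proposition and \cite[Lemma 5.2]{CoLi1}, the quasi-Stolz regions $D(\frac1\beta,b)$ and the sets $\Lambda_{K,\beta}$ are mutually nested, up to changing constants. For $\beta=1$, Stolz regions and the sets $\Lambda_{K,1}$ are likewise comparable; this is classical, or follows from the same elementary computation. So it suffices to prove the following: $\sup_{n,z\in\sigma(P)} n^\beta|z|^n|1-z|<\infty$ if and only if $\sigma(P)\subset\Lambda_{K,\beta}$ for some $K$.

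\textbf{Sufficiency.} Suppose $z\in\Lambda_{K,\beta}$ and write $r=|z|$. Then
$$
n^\beta r^n|1-z|\le K\, n^\beta r^n(1-r)^\beta=K\big(n(1-r)r^{n/\beta}\big)^\beta .
$$
We have $\sup_{r\in[0,1]} n(1-r)r^{n/\beta}\le \beta/\e\le 1$, by the same maximization as in the proof of Theorem \ref{symmetric}. This gives the bound uniformly in $n$, so $P$ is quasi-Ritt of order $\beta$. Alternatively, one could simply invoke \cite[Theorem 2.4]{CoLi1} together with the inclusion of the previous proposition.

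\textbf{Necessity.} Assume $n^\beta|z|^n|1-z|\le M$ for all $n$ and all $z\in\sigma(P)$.

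First consider $|z|=1$. Then $|1-z|\le M n^{-\beta}\to0$, so $z=1$, which lies in $\Lambda_{K,\beta}$.

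Next take $z\in\sigma(P)$ with $r=|z|<1$, and choose $n=\lceil 1/(1-r)\rceil$. Then $r^n\ge c>0$ uniformly. For $r\ge 1/2$ this holds since $r^{1/(1-r)+1}$ is bounded below (by $\e^{-2}/2$, say). For $r<1/2$ we instead use the trivial bound $|1-z|\le 2\le 2^{1+\beta}(1-r)^\beta$. In the case $r\ge1/2$ we get
$$
|1-z|\le \frac{M}{c\,n^\beta}\le \frac{M}{c}(1-r)^\beta,
$$
since $n\ge 1/(1-r)$. Hence $\sigma(P)\subset\Lambda_{K,\beta}$ with $K=\max(M/c,\,2^{1+\beta})$.

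Combining the two directions with the nesting of the regions gives (i). For (ii), $\beta=1$, and we use that $\Lambda_{K,1}$ is contained in a Stolz region (the computation in part (ii) of the previous proposition gives a cone condition $|y|\le C(1-x)$ near $1$, plus a disk away from $1$). Conversely, a Stolz region is contained in some $\Lambda_{K,1}$, the classical Stolz angle estimate $|1-z|\le K(1-|z|)$.

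The main obstacle is bookkeeping: translating between $D(a,b)$, Stolz regions and $\Lambda_{K,\beta}$ with the right constants, and handling points of the spectrum far from $1$. Both are covered by the earlier inclusion results and elementary estimates.
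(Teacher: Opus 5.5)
Your proof is correct, and it takes a genuinely different route from the paper, which gives no argument of its own here: (i) is quoted from \cite[Corollary 2.7]{CoLi1} and (ii) from \cite[Proposition 2.5]{CCL1}.

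You work directly with the continuous functional calculus. The identity $\|P^n-P^{n+1}\|=\sup_{z\in\sigma(P)}|z|^n|1-z|$ turns the quasi-Ritt property into a scalar condition on $\sigma(P)$. That condition already forces $\sigma(P)\subset\DD$, since $n^\beta|z|^n|1-z|\to\infty$ when $|z|>1$. You then show it is equivalent to $\sigma(P)\subset\Lambda_{K,\beta}$ for some $K$: sufficiency by maximizing $(1-r)r^{n/\beta}$, necessity by testing with $n=\lceil 1/(1-|z|)\rceil$, where $r^n\ge \e^{-2}/2$ for $r\ge 1/2$. Both computations are right.

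Your constant $\beta/\e$ is correct because, with $s=n/\beta$, one has $\max_{0\le r\le 1}(1-r)r^s=\frac1s\big(\frac{s}{s+1}\big)^{s+1}\le\frac1{\e s}$. Justify it this way rather than by pointing to the proof of Theorem \ref{symmetric}. There the intermediate inequality $(\frac{n}{n+1})^n\le\frac1\e$ actually goes the wrong way; only its conclusion $n\|P^n-P^{n+1}\|\le 1/\e$ is true.

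The transfer to $D(\frac1\beta,b)$ is then pure plane geometry, via the two inclusions the paper records. For (ii) it goes via the comparison of Stolz regions with $\Lambda_{K,1}$. One direction is a one-liner: writing $z=t+(1-t)w$ with $t\in[0,1]$ and $|w|\le\rho$ gives $|1-z|\le\frac{1+\rho}{1-\rho}(1-|z|)$. Your cone-plus-disk sketch gives the other.

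What your route buys:
\begin{itemize}
\item a self-contained argument;
\item the intrinsic characterization ``a normal $P$ is quasi-Ritt of order $\beta$ iff $\sigma(P)\subset\Lambda_{K,\beta}$ for some $K$'', valid for every $\beta\in(0,1]$.
\end{itemize}
That characterization is exactly the spectral hypothesis of Theorem \ref{normal1}, with no $\Delta_\eta$ part needed, so it links the normal and quasi-Ritt parts of Section 3 directly. The citation route is shorter and places the result inside Cohen--Lin's framework of sufficient conditions for general contractions \cite[Theorem 2.4]{CoLi1}.

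One dependence should be made explicit. For (i), your argument is only as good as the two inclusions between $D(\frac1\beta,b)$ and $\Lambda_{K,\beta}$, and these rest on the definition of $D(a,b)$ in \cite{CoLi1}, which you never state.
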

Part (i) is proved in \cite[Corollary 2.7]{CoLi1}.
Part (ii) is proved in \cite[Proposition 2.5]{CCL1}.
\medskip

\begin{prop} \label{normal-quasi}
Let $P$ with invariant probability $m$  be normal on $L_2(m)$ and ergodic.
Assume that $P$ is quasi-Ritt of order $\beta \in (\frac12,1]$. Then the following are equivalent:

(i) Every $f \in L_{2,0}(m)$ satisfies the annealed CLT.

(ii) Every non-zero $f \in L_{2,0}(m)$ satisfies a non-degenerate annealed CLT.

(iii) $P$ is uniformly ergodic on $L_2(m)$

(iv) $\|P^n-E\|_2 \to 0$.

\noindent
If either condition holds, then:

(v) Every $0 \ne f \in L_{2,0}(m)$ satisfies the $L_2$-normalized CLT.

\noindent
When $P$ is also Harris recurrent, then (v) is equivalent to the other conditions.
\end{prop}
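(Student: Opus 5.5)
The plan is to reproduce the structure of the proof of Theorem \ref{symmetric}, with Proposition \ref{quasi-ritt} as the main tool. Positive definiteness is only used there to obtain the Ritt property, and that property is now an assumption.

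\textbf{Cycle (iv) $\Rightarrow$ (iii) $\Rightarrow$ (i) $\Rightarrow$ (iv).}
\begin{itemize}
\item (iv) implies (iii) trivially.
\item (iii) implies (i) by Gordin--Lifshits \cite{GL}.
\item (i) implies (iv) is exactly Proposition \ref{quasi-ritt}. That proposition uses only the quasi-Ritt hypothesis and the global annealed CLT; normality is not needed.
\end{itemize}

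\textbf{Adding (ii).} Clearly (ii) implies (i). For (iv) $\Rightarrow$ (ii) I would invoke \cite[Proposition 2.9]{CL23}, as in Corollary \ref{symmetric3}. That result was quoted for symmetric $P$, so I must check it only needs normality; I expect this to be the main obstacle.

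If it does not apply directly, I would argue through the spectral measure. By the proposition above on normal operators with the global CLT, the martingale approximation holds with
$$\E(d_1^2)=\int_\DD \frac{1-|z|^2}{|1-z|^2}\,d\rho_f(z).$$
Under (iv), $P|L_{2,0}$ has spectral radius $<1$, so $\rho_f$ puts no mass on the unit circle. By Lemma \ref{ergodic-normal}, $P^*P$ is ergodic. Then the integrand is strictly positive $\rho_f$-a.e. Hence $\E(d_1^2)>0$ whenever $f\ne0$, which gives the non-degenerate CLT (ii).

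\textbf{The $L_2$-normalized CLT (v).} Assume (iv). Then $\frac1n\sigma_n^2(f)$ converges, by \cite{GL} (uniform ergodicity puts $f$ in $(I-P)L_{2,0}$). The limit equals $\E(d_1^2)>0$ by the previous step. Combined with the annealed CLT, this gives (v) after normalizing by $\sigma_n(f)\sim\sqrt n\,\sigma_f$. Alternatively, (v) follows from \cite[Proposition 2.9]{CL23} if that applies to normal $P$.

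\textbf{Harris recurrent case.} Here (v) implies (iv) by \cite[Proposition 4.11]{CL23}, as in Theorem \ref{symmetric}. I would check that this result requires only normality (or ergodicity) rather than symmetry. If symmetry is genuinely used there, the fallback is:
\begin{itemize}
\item Use (v) together with \cite[Theorem 1]{PU}-type regular variation $\sigma_n^2(f)=nL(n)$.
\item Deduce a rate $\|\frac1n\sum_{k=1}^n P^kf\|\le C_f n^{-\epsilon}$, as in the proof of \cite[Theorem 3.9]{CL23}.
\item Conclude uniform ergodicity (iii) by Banach--Steinhaus, and then return to (iv) via the cycle above.
\end{itemize}
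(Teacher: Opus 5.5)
Your proposal is correct and follows the paper's proof step by step. The paper proves (i)$\Rightarrow$(iv) by Proposition \ref{quasi-ritt}, (iv)$\Rightarrow$(iii)$\Rightarrow$(i) via \cite{GL}, (iv)$\Rightarrow$(ii) and (iv)$\Rightarrow$(v) by \cite[Proposition 2.9]{CL23} (which it does apply to normal $P$), and (v)$\Rightarrow$(iv) under Harris recurrence by \cite[Proposition 4.11]{CL23}. Your spectral backup for (ii) and (v), using $\E(d_1^2)=\int_\DD \frac{1-|z|^2}{|1-z|^2}\,d\rho_f(z)>0$ when $r(P|L_{2,0})<1$ together with Slutsky, is also sound.

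Do not rely on your Harris fallback, though. \cite[Theorem 1]{PU} gives $\sigma_n^2(f)=nL(n)$ only from the \emph{functional} $L_2$-normalized CLT, which is how it is used in Theorem \ref{symmetric}. There the non-functional (vi) needs Harris recurrence to reach (iv). Condition (v) here is non-functional, and your fallback never uses Harris recurrence at all.
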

\begin{proof}
(i) implies (iv) by  Proposition \ref{quasi-ritt}.
(iv) implies (ii) by \cite[Proposition 2.9]{CL23}. Obviously (ii) implies (i).
Also (iv) implies (iii), which implies (i) by \cite{GL}.

The $L_2$-normalized CLT follows from (iv) by \cite[Proposition 2.9]{CL23}.
\smallskip

When $P$ is also Harris recurrent, (v) implies (iv) by \cite[Proposition 4.11]{CL23}.
\end{proof}

{\bf Remarks.} 1.   Condition (ii) alone, without the  quasi-Ritt assumption, implies that
 $P^*P$ is ergodic; see the proof of \cite[Corollary 2.8]{CL23}.
For $P$ normal, $P^*P$ is ergodic if and only if $\|P^ng-g\|_2 \to 0$ for every 
$g \in L_2(m)$, by \cite[Lemma 4.2]{CL23}.

2. If we do not assume normality, conditions $(i)$, $(iii)$ and $(iv)$ remain equivalent,
but they are no longer equivalent to $(ii)$, as shown by the example after Remark 2.9 of 
\cite{CL23}. 

\begin{cor}
Let $P$ with invariant probability $m$  be normal on $L_2(m)$ and ergodic. Then
$\|P^n-E\|_2 \to 0$ if and only if $P$ is Ritt and every $f \in L_{2,0}(m)$ satisfies the
annealed CLT.
\end{cor}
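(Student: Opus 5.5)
For the sufficiency direction, assume $P$ is Ritt and that every $f\in L_{2,0}(m)$ satisfies the annealed CLT. Ritt means quasi-Ritt of order $\beta=1$. Proposition \ref{quasi-ritt}, or equivalently the implication (i)$\Rightarrow$(iv) of Proposition \ref{normal-quasi}, then gives $\|P^n-E\|_2\to 0$ directly. Normality is not even needed for this direction.

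For the necessity direction, assume $\|P^n-E\|_2\to 0$.

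First, we obtain the global annealed CLT. Rosenblatt's condition implies uniform ergodicity, since $\frac1n\sum_{k=1}^n P^k - E = \frac1n\sum_{k=1}^n (P^k-E)$ tends to $0$ in norm. By Gordin--Lifshits \cite{GL}, every $f\in L_{2,0}(m)=(I-P)L_{2,0}(m)$ then satisfies the annealed CLT. One could also cite \cite[Proposition 2.9]{CL23}.

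Second, we show that $P$ is Ritt. The Remark preceding Proposition \ref{quasi-ritt} states that $\|P^n-E\|_2\le C\rho^n$ for some $\rho<1$ (\cite[Proposition 3.1]{DL4}). Using $EP=PE=E$ and $E^2=E$, we have
$$P^n-P^{n+1}=(P^n-E)-(P^{n+1}-E).$$
Hence $n\|P^n-P^{n+1}\|_2\le 2Cn\rho^n$, which is bounded, so $P$ is Ritt. Again normality is not used.

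Alternatively, for $P$ normal one could argue spectrally. By Proposition \ref{normal}(ii), Ritt is equivalent to $\sigma(P)$ lying in a Stolz region. A spectral gap means $\sigma(P)\setminus\{1\}\subset\{|z|\le r\}$ for some $r<1$, and the convex hull of $\{1\}$ and this disk is a Stolz region. On the constants, $P=I$, which has spectrum $\{1\}$ and is harmless.

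The only step needing care is the decomposition $P^n-P^{n+1}=(P^n-E)-(P^{n+1}-E)$, which requires the commutation $EP=PE=E$. This holds because $m$ is invariant, so $EP=E$, and $P1=1$, so $PE=E$. There is no real obstacle otherwise.
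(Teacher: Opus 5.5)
Your proof is correct and follows the paper's argument: the converse is exactly (i)$\Rightarrow$(iv) of Proposition \ref{normal-quasi} (that is, Proposition \ref{quasi-ritt} with $\beta=1$), and the direct part combines the geometric rate from the Remark before Proposition \ref{quasi-ritt} with a standard global CLT (the paper cites Rosenblatt \cite{Ro}, while you use uniform ergodicity and Gordin--Lifshits \cite{GL}, which is equally valid). One small correction: the identity $P^n-P^{n+1}=(P^n-E)-(P^{n+1}-E)$ holds trivially because the $E$'s cancel, so it needs no commutation; the relations $EP=PE=E^2=E$ are actually used inside the cited geometric-rate result, through $P^n-E=(P-E)^n$ for $n\ge 1$.
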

\begin{proof} As noted above, $\|P^n-E\|_2 \to 0$ implies that $P$ is Ritt,  and the global
annealed CLT follows from \cite{Ro}. The converse is (i) implies (iv) in Proposition 
\ref{normal-quasi}. 

Note that $\|P^n-E\|_2 \to 0$ implies that every $0\ne f \in L_{2,0}(m)$ satisfies a
non-degenerate annealed CLT \cite[Proposition 2.9]{CL23}.
\end{proof}

The next proposition presents a global CLT when $P$ is Ritt, without assuming normality.

\begin{prop}
Let $P$ with invariant probability $m$  be  ergodic. If $P$ is Ritt in the complex $L_2(m)$ with
$\limsup_{n\to\infty} n\|P^{n+1}-P^n\|_2 < 1/\e$, then  every $f \in L_{2,0}(m)$ satisfies the
annealed CLT.
\end{prop}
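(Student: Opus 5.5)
The plan is to show that $P$ is uniformly ergodic on $L_2(m)$. The global annealed CLT then follows from Gordin--Lifshits \cite{GL}, as recalled in the Introduction. I work in the complex $L_2(m)$ and argue spectrally. First I show that $1$ is an isolated point of $\sigma(P)$. Then I show that the Riesz spectral subspace of $P$ at $\{1\}$ consists only of fixed points, hence of constants by ergodicity. This makes $I-P$ invertible on $L_{2,0}(m)$.

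\emph{Step 1: $1$ is isolated in $\sigma(P)$.} By the spectral mapping theorem, for every $z\in\sigma(P)$ and every $n$ we have $|z|^n|1-z|\le \|P^n-P^{n+1}\|_2$. By hypothesis there are $c<1/\e$ and $N$ such that $n|z|^n|1-z|\le c$ for all $n\ge N$ and all $z\in\sigma(P)$.
\begin{itemize}
\item If $|z|=1$, this inequality forces $z=1$.
\item If $z\in\sigma(P)$, $z\neq 1$ and $|z|<1$, write $r=|z|$ and choose $n=[1/(1-r)]$. When $r$ is close to $1$ this $n$ exceeds $N$, and $n r^n=\frac{1+o(1)}{\e(1-r)}$. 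Hence $|1-z|\le c\,\e\,(1+o(1))(1-r)$.
\end{itemize}
Since $c\e<1$, this contradicts the elementary bound $|1-z|\ge 1-|z|$ once $z$ is near $1$. So there is $\delta>0$ with $\sigma(P)\cap\{|1-z|<\delta\}=\{1\}$.

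\emph{Step 2: the spectral subspace at $1$.} Let $Q$ be the Riesz projection of $P$ for the isolated point $1$, and set $H_1=QL_2$. This space is $P$-invariant, and $T:=P|_{H_1}$ satisfies $\sigma(T)=\{1\}$. So $N:=T-I$ is quasinilpotent, and $T$ inherits the Ritt property. I then use the standard resolvent characterisation of Ritt operators: $\sigma(T)$ lies in a Stolz region, and $\|(z-T)^{-1}\|\le C/|z-1|$ for $z$ outside a closed sector with vertex $1$ and opening $<\pi$ that contains that region.

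Consider the entire function $F(\mu)=(I-\mu N)^{-1}$. It is entire because $N$ is quasinilpotent, and it is of exponential type $0$ for the same reason. Writing $\lambda=1/\mu$, we have $F(\mu)=\lambda(\lambda-N)^{-1}=\lambda\bigl((1+\lambda)-T\bigr)^{-1}$. By the resolvent bound, $F$ is uniformly bounded on the complement of a sector of opening $<\pi$ in the $\mu$-plane. The Phragm\'en--Lindel\"of principle applied inside that sector then shows that $F$ is bounded on all of $\C$. By Liouville's theorem $F\equiv F(0)=I$, so $N=0$.

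I expect this Phragm\'en--Lindel\"of step to be the main technical obstacle. One has to check carefully that the sector where the resolvent bound fails really has opening $<\pi$; if it does not, a sharper resolvent estimate coming from Ritt's condition is needed. An alternative I would also try is to show directly that Ritt together with $\sigma(T)=\{1\}$ forces $T=I$, for instance via $\|T^n(I-T)\|\to0$ combined with a holomorphic functional calculus argument.

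\emph{Step 3: conclusion.} Since $N=0$, every element of $H_1$ is fixed by $P$. Because $P$ is ergodic and $m$ is a probability, the fixed points of $P$ in $L_2(m)$ are the constants; this uses the mean ergodic theorem, since $L_2$-fixed points are $L_\infty$-approximable. Hence $H_1$ is the space of constants and $Q=E$. On the complementary invariant subspace $(I-Q)L_2=L_{2,0}(m)$, the spectrum of $P$ does not contain $1$. Therefore $I-P$ is invertible on $L_{2,0}(m)$, i.e.\ $P$ is uniformly ergodic, and every $f\in L_{2,0}(m)$ belongs to $(I-P)L_{2,0}(m)$. By \cite{GL}, every such $f$ satisfies the annealed CLT.
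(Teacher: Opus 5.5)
Your Step 1 is correct; it is essentially the spectral-localization half of \cite[Theorem 3.1]{KMOT}. Step 3 would also go through once you know $N=0$. The gap is Step 2. The claim you are trying to prove there is that a Ritt operator $T$ with $\sigma(T)=\{1\}$ must equal $I$, and that claim is false. So neither your Phragm\'en--Lindel\"of argument nor the ``direct'' alternative you mention can work.

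The concrete error is the assertion that $F(\mu)=(I-\mu N)^{-1}=\sum_k \mu^k N^k$ has exponential type $0$ because $N$ is quasinilpotent. The condition $\|N^k\|^{1/k}\to 0$ only makes $F$ entire; it gives no bound on its order. You may be thinking of $e^{\mu N}=\sum_k \mu^k N^k/k!$, which does have exponential type $0$.

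Here is a counterexample. Let $J^{1/2}=V^{1/2}$ be the Riemann--Liouville fractional integral of order $1/2$ on $L_2[0,1]$, where $V$ is the Volterra operator.
\begin{itemize}
\item It is quasinilpotent, since $\|J^{k/2}\|\le 1/\Gamma(k/2+1)$.
\item It is sectorial of angle $\pi/4$, being the square root of the accretive operator $V$.
\item Hence, for small $t>0$, $T:=I-tJ^{1/2}$ is a Ritt operator with $\sigma(T)=\{1\}$ and $T\neq I$, because $I-T$ is sectorial of angle $<\pi/2$.
\end{itemize}
For this $T$, your $F$ is indeed bounded off the sector $\{|\arg(-\mu)|\le\theta\}$ for every $\theta>\pi/4$. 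That sector has opening $<\pi$, as you wanted. Inside it, however, $F(\mu){\mathbf 1}(x)=E_{1/2}(-t\mu\sqrt{x})$, a Mittag-Leffler function. This grows like $\exp(ct^2|\mu|^2)$, so $F$ has order $2$. That order is too large for Phragm\'en--Lindel\"of in a sector of opening $2\theta>\pi/2$.

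So the Ritt property alone cannot eliminate the quasinilpotent part at $1$. You must use the quantitative hypothesis $\limsup_n n\|P^n-P^{n+1}\|_2<1/\e$ a second time, not only in Step 1. Since $H_1$ is $P$-invariant, $T=P|_{H_1}$ inherits $\limsup_n n\|T^n-T^{n+1}\|<1/\e$. Esterle's theorem then gives $T=I$; the same conclusion also follows by applying \cite[Theorem 3.1]{KMOT} to $T$. After that, your Step 3 completes the proof, and it even yields $\|P^n-E\|_2\to 0$ geometrically.

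The paper packages all of this into one citation. By \cite[Theorem 3.1]{KMOT}, $P^n$ converges in operator norm; ergodicity identifies the limit as $E$; and Rosenblatt's global CLT applies to $\|P^n-E\|_2\to0$. Your Gordin--Lifshits ending via uniform ergodicity would serve equally well.
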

\begin{proof} By Kalton et al. \cite[Theorem 3.1]{KMOT}, $P^n$ converges in operator norm.
Ergodicity of $P$ implies that the limiting projection is $E$, so $\|P^n-E\|_2 \to 0$.
The global annealed CLT then follows from Rosenblatt's CLT  \cite[p. 217]{Ro}.
\end{proof}

\medskip

\section{Appendix: Peripheral spectra of Markov operators}

\subsection{The Katznelson-Tzafriri theorem}

A power-bounded operator $T$ on a complex Banach space has spectral radius $r(T) \le 1$.
When $r(T)=1$, we calll $\sigma(T)\cap\mathbb T$ the {\it peripheral spectrum} of $T$;
if $r(T)<1$ then $\|T^n\| \to 0$.

Katznelson and Tzafriri \cite{KT} proved the following result.

\begin{theo} \label{kt}
Let $T$ be a power-bounded operator on a complex Banach space. Then
$\|T^{n}-T^{n+1}\| \to 0$ if and only if $\sigma(T)\cap\mathbb T \subset\{1\}$.
\end{theo}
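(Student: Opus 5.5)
The plan has two directions. Necessity will use the spectral mapping theorem. Sufficiency will use a Beurling-algebra / spectral-synthesis argument in the Wiener algebra, which is the classical route of Katznelson and Tzafriri \cite{KT}.

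\emph{Necessity.} Suppose $\lambda\in\sigma(T)\cap\mathbb T$ with $\lambda\ne 1$. By the spectral mapping theorem for the polynomial $z^n(1-z)$, we get $\lambda^n(1-\lambda)\in\sigma(T^n-T^{n+1})$. Hence
$$\|T^n-T^{n+1}\|\ge r(T^n-T^{n+1})\ge|1-\lambda|>0$$
for every $n$, so $\|T^n-T^{n+1}\|\not\to0$.

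\emph{Sufficiency, setup.} Put $M:=\sup_n\|T^n\|$. For $b=(b_k)_{k\in\mathbb Z}\in\ell^1(\mathbb Z)$, with Fourier transform $\hat b(z)=\sum b_kz^k$ in the Wiener algebra $A(\mathbb T)$, define the seminorm
$$q(b):=\limsup_{n\to\infty}\Big\|\sum_{k\ge -n} b_k\,T^{n+k}\Big\|.$$
It satisfies $q(b)\le M\|b\|_1$. It is invariant under multiplication by $z^{\pm1}$, and it is submultiplicative up to the constant $M$. It follows that $J:=\{b: q(b)=0\}$ is a closed ideal of $A(\mathbb T)$. The goal is $q(1-z)=0$, i.e.\ $1-z\in J$.

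\emph{Sufficiency, key steps.} First I will show that the hull of $J$ is contained in $\sigma(T)\cap\mathbb T=:\Gamma$. Concretely, I will show that every $b$ whose $\hat b$ has compact support disjoint from $\Gamma$ lies in $J$. The natural tool is the resolvent $R(\lambda)=(\lambda-T)^{-1}$, which is analytic across $\mathbb T\setminus\Gamma$ and satisfies $\|R(re^{it})\|\le M/(r-1)$ for $r>1$. Write $\hat b=\hat b\,\varphi$ with $\varphi$ smooth, equal to $1$ on $\operatorname{supp}\hat b$ and vanishing near $\Gamma$. Then express $\sum_k b_kT^{n+k}$ by a Cauchy integral over the circle $|\lambda|=r$ and let $r\downarrow1$. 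This is possible because $\hat b$ vanishes near $\Gamma$, so the boundary values of the resolvent are locally bounded on the support, and a Riemann--Lebesgue argument in $n$ then gives $q(b)=0$. I expect this step to be the main obstacle: controlling the limit $r\to1$ uniformly in $n$ and producing the decay in $n$. I would first try the smoothed-resolvent formula $T^n\hat b(T)=\frac1{2\pi}\int \hat b(e^{it})\,e^{int}\,\lim_{r\downarrow 1}\big[R(re^{it})-R(r^{-1}e^{it})\big]\,dt$-type representation on $\operatorname{supp}\hat b$. Here the interior resolvent term is replaced by the $T^{-k}$-free analytic continuation available off $\Gamma$.

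\emph{Sufficiency, conclusion.} By hypothesis $\Gamma\subset\{1\}$, and $\hat{(1-z)}$ vanishes at $1$. Points are sets of spectral synthesis for $A(\mathbb T)$. Concretely, take bumps $\varphi_\varepsilon$ with $\varphi_\varepsilon=1$ near $1$, supported in an arc of length of order $\varepsilon$, and with $\|(1-z)\varphi_\varepsilon\|_{A}\to0$. This holds since $1-z$ vanishes to first order at $1$, and the Wiener norm of $(1-z)\varphi_\varepsilon$ is $O(\varepsilon^{1/2})$ by the Sobolev estimate $\|g\|_A\lesssim\|g\|_2+\|g\|_2^{1/2}\|g'\|_2^{1/2}$. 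Then $(1-z)(1-\varphi_\varepsilon)$ vanishes near $\Gamma$, so it lies in $J$ by the previous step. Therefore
$$q(1-z)\le q\big((1-z)\varphi_\varepsilon\big)\le M\|(1-z)\varphi_\varepsilon\|_A\to0,$$
which gives $\|T^n-T^{n+1}\|\to0$. If $\Gamma=\emptyset$, then $r(T)<1$ and the conclusion is immediate.
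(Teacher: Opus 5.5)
Your proposal is correct in its overall structure. The necessity direction is exactly the spectral-mapping argument the paper gives in Remark~2 after the theorem. For sufficiency, which the paper does not prove but cites from \cite{KT}, you reconstruct the Katznelson--Tzafriri argument: the seminorm $q$, vanishing of $q$ on functions supported off $\Gamma$, and the fact that points are sets of synthesis for $A(\mathbb T)$. The final synthesis step works as written. In fact your Sobolev estimate gives $\|(1-z)\varphi_\varepsilon\|_A=O(\varepsilon)$, which is better than the $O(\varepsilon^{1/2})$ you claim.

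One correction is needed in the key step: drop the difference-of-resolvents formula. A neighbourhood of $\operatorname{supp}\hat b$ lies in the resolvent set, so $R(re^{it})$ and $R(r^{-1}e^{it})$ both converge uniformly to $R(e^{it})$ there. That formula would therefore give $0$ for every $n$. But the truncated sums $\sum_{k\ge -n}b_kT^{n+k}$ are in general nonzero; for instance, with $T=I$ on $\mathbb C$ they equal $-\sum_{k<-n}b_k$.

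The exterior resolvent alone does the job. For $r>1$, expand $R(re^{it})=\sum_{j\ge0}r^{-j-1}e^{-i(j+1)t}T^j$. For each fixed $n$ this gives
\[
\frac1{2\pi}\int_0^{2\pi}(re^{it})^{n+1}\hat b(e^{it})R(re^{it})\,dt=\sum_{j\ge0}r^{n-j}b_{j-n}T^j .
\]
Now let $r\downarrow1$, using dominated convergence on the right and uniform convergence of $R$ on $\operatorname{supp}\hat b$ on the left. This yields the exact identity
\[
\sum_{k\ge -n}b_kT^{n+k}=\frac1{2\pi}\int_{\{t:\,e^{it}\in\operatorname{supp}\hat b\}} e^{i(n+1)t}\,\hat b(e^{it})\,R(e^{it})\,dt .
\]
The integrand, extended by $0$ off the support, is a continuous $B(X)$-valued function. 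The vector-valued Riemann--Lebesgue lemma then gives convergence to $0$ in operator norm, hence $q(b)=0$. No uniformity in $n$ is needed, so the ``main obstacle'' you anticipate does not arise.
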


{\bf Remarks.} 1. Obviously, if $T^n$ converges in operator norm, then
$\|T^n-T^{n+1}\| \to 0$.

2. The "only if" part of Theorem \ref{kt} follows from the spectral mapping theorem:
if $\lambda \in \sigma(T)\cap\mathbb T$, then
$$
|\lambda-1|=|\lambda^n(\lambda-1)| \le r(T^n-T^{n+1}) \le \|T^n-T^{n+1}\| \to 0.
$$

3. When $T$ is a normal contraction in Hilbert space, the "if" part of Theorem \ref{kt}
follows from the spectral theorem, since for $\lambda \in \sigma(T)$ the assumption
implies $|\lambda^n(1-\lambda)| \to 0$.

4. When $T$ is a power-bounded {\em positive} operator on a complex Banach lattice,
$\|T^n-T^{n+1}\| \to 0$ if and only if $\inf_n \|T^n-T^{n+1}\| < \sqrt3$, by
\cite[Theorem 5]{L}.

\begin{cor} \label{reflexive}
Let $T$ be a mean ergodic power-bounded operator on a complex Banach space $X$ (e.g.
$X$ is reflexive).  If $\sigma(T)\cap\mathbb T \subset\{1\}$, then $T^n$ converges in
the strong operator topology.
\end{cor}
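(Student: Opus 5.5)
We combine Theorem \ref{kt} with the mean ergodic decomposition. Since $T$ is mean ergodic, we have
$$
X = F \oplus \overline{(I-T)X},
$$
where $F=\{x: Tx=x\}$ is the fixed space. The Cesàro averages converge strongly to the projection $Q$ onto $F$ along $\overline{(I-T)X}$. It therefore suffices to show that $T^n y \to 0$ for every $y$ in the closed subspace $\overline{(I-T)X}$, because on $F$ the sequence $T^n$ is constant.

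The first step is to use the assumption $\sigma(T)\cap\mathbb T\subset\{1\}$. By Theorem \ref{kt} it gives $\|T^n - T^{n+1}\|\to 0$. Hence for $y=(I-T)x$ we get
$$
\|T^n y\| = \|(T^n-T^{n+1})x\| \le \|T^n-T^{n+1}\|\,\|x\| \to 0 .
$$
So $T^n y\to 0$ on the dense subset $(I-T)X$ of $\overline{(I-T)X}$.

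The second step is a standard $\varepsilon/2$ argument, which uses power-boundedness. Let $M=\sup_n\|T^n\|$. Given $y\in\overline{(I-T)X}$ and $\varepsilon>0$, pick $y'\in(I-T)X$ with $\|y-y'\|<\varepsilon/(2M)$. Then
$$
\|T^n y\| \le M\|y-y'\| + \|T^n y'\| < \varepsilon/2 + \|T^n y'\|,
$$
and the last term is $<\varepsilon/2$ for large $n$. Thus $T^n y \to 0$.

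Combining the two steps, for each $x\in X$ we write $x=Qx+(x-Qx)$ and obtain $T^n x = Qx + T^n(x-Qx) \to Qx$. This is strong convergence of $T^n$ to $Q$.

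For the parenthetical claim, the fact that a power-bounded operator on a reflexive space is mean ergodic is the classical Lorch/Yosida theorem, which we simply cite. There is no real obstacle in this argument: the only substantive input is Theorem \ref{kt}, and the remaining care is in recalling that the decomposition $X=F\oplus\overline{(I-T)X}$ is exactly the characterization of mean ergodicity.
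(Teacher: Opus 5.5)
Your proof is correct and follows the same route as the paper: the mean ergodic decomposition $X=\{y:Ty=y\}\oplus\overline{(I-T)X}$, then Theorem~\ref{kt} to get $T^n\to 0$ on $(I-T)X$. You also spell out the density-plus-power-boundedness step that the paper leaves implicit.
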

\begin{proof} By the mean ergodic theorem, $X=\{y\in X:Ty=y\}\oplus\overline{(I-T)X}$.
Apply now Theorem \ref{kt}.
\end{proof}

\medskip

{\bf Definition.} A power-bounded $T$ is a {\it Ritt operator} if $\sup_n n\|T^n(I-T)\|<\infty$.

This means the rate $\|T^n-T^{n+1}\|=O(1/n)$ in Theorem \ref{kt}.
By \cite[Theorem 3.1]{KMOT}, either $\limsup_n n\|T^n-T^{n+1}\| \ge 1/\rm e$, or $T^n$
converges in operator norm (and then we have $\|T^n-T^{n+1}\| \le M\rho^n$ for some $\rho<1$).
In particular, $\|T^n-T^{n+1}\|=o(1/n)$ implies operator-norm convergence of $T^n$.

 Nagy and Zem\'anek \cite{NZ} proved that a power-bounded Ritt operator $T$ has its spectrum 
 contained in a closed Stolz region. A normal contraction in a complex Hilbert
 space  $H$ is Ritt if and only if its spectrum is contained in a Stolz region
 \cite[Proposition 2.5]{CCL1}. Hence a symmetric contraction which is positive definite is Ritt,
a fact already implicit in Stein \cite{St}. In fact, a symmetric contraction on $H$  is Ritt
if and only if $-1 \notin \sigma(T)$.

Other rates of convergence in Theorem \ref{kt} (slower than $1/n$) were studied by Seifert \cite{Seifert} and by  Cohen 
and Lin \cite{CoLi1}.  When $T$ is normal in $H$, such rates were characterized by $\sigma(T)$
being a subset of a certain quasi-Stolz region.

\medskip

Now let $P$ be a Markov operator with invariant probability $m$, which is assumed ergodic.
Then $P$ is also a contraction of the complex $L_2(m)$, with spectrum $\sigma(P)$.
Since $P{\mathbf 1}={\mathbf 1}$, $1 \in \sigma(P)$. Corollary \ref{reflexive} yields
that if $\sigma(P)\cap \mathbb T=\{1\}$, then every $f \in L_2(m)$ satisfies
$\|P^nf-Ef\|_2 \to 0$ and, since also $\sigma(P^*)\cap \mathbb T=\{1\}$,
$\|P^{*n}f-Ef\|_2 \to 0$. By approximation, $\lim_n\|P^nf-Ef\|_1 = 0=\lim_n\|P^{*n}f-Ef\|_1 = 0$
for every $f \in L_1(m)$. This last property is equivalent to triviality (mod $\mathbb P_m$)
of both tail $\sigma$-fields of the two-sided stationary chain defined by $P$ 
\cite[Th\'eor\`eme 6]{D}.
\smallskip

{\bf Example 1.} {\it A non-Ritt normal Markov operator with $\sigma(P)\cap\mathbb T=\{1\}$.}

\noindent
Let $\theta$ be an invertible measure preserving transformation of a standard probability space
$(S,\Sigma,m)$, with Koppman operator $Uf=f\circ\theta$. Then on the complex $L_2(m)$ $U$
satisfies $\sigma(U)=\mathbb T$ \cite{IT}, \cite{Foi}. Define  $P:=\frac12(I+U)$. Then $m$ is
invariant for $P$, and $P$ is normal and ergodic in $L_2(m)$. By the spectral mapping theorem,
$\lambda \in \sigma(P)$ if and only if $\lambda=\frac12(1+z)$ for some $z \in\mathbb T$,
 which shows that $\sigma(P)\cap\mathbb T=\{1\}$. For $0 \ne \lambda \in \sigma(P)$,
 $\lambda \ne 1$, we have $Re(\lambda) >0$ and $Im(\lambda) \ne 0$, so $\sigma(P)$ is not
 contained in a Stolz (or quasi-Stolz) region. In fact, $\sigma(P)$ is the circle of center
 $(\frac12,0)$ and radius  $\frac12$. In particular, $P$ is not Ritt.
 \smallskip

 {\bf Remark.}  In Example 1 the point $1$ is not isolated in $\sigma(P)$, so
 $\lim_n \|P^n-E\|_2 >0$.
 \medskip

 {\bf Example 2.} {\it A Ritt symmetric Markov operator with $\lim_n\|P^n-E\|_2 >0$.}

 \noindent
 Let $\theta$ and $U$ be as in Example 1, and define $P=\frac12I +\frac14 U+\frac14 U^{-1}$.
 Then $P$ is symmetric, and, by the spectral mapping theorem,  $\sigma(P)=[0,1]$. Hence
 $P$ is positive definite, so it is Ritt. Since $1$ is not isolated
 in $\sigma(P)$,
 $\lim_n \|P^n-E\|_2>0$.

\subsection{Peripheral spectra in $L_p$, $1<p< \infty$}

A Markov operator $P$ with invariant probability $m$ is a contraction of all $L_p$ spaces.
We denote by $\sigma(P_{|L_p})$ the spectrum of $P$ as a contraction on the {\em complex}
$L_p(m)$ space, $p\ge 1$.
\smallskip

The special case of Corollary \ref{reflexive}, when $P$ is an ergodic Markov operator
with invariant probability, had been proved in \cite{LS} (before \cite{KT}), assuming
$\sigma(P_{|L_1})\cap\mathbb T=\{1\}$.
\smallskip

Rosenblatt \cite[p. 211]{Ro} proved (for the ergodic case) that if $\|P^n-E\|_r \to 0$ for
some $1 \le r \le \infty$, then $\|P^n-E\|_p \to 0$ for every $1<p< \infty$.
The following is in the same spirit (without any ergodicity assumption).

\begin{theo} \label{gluck}
Let $P$ be a Markov operator with invariant probability $m$. Then for $1<p< \infty$
the peripheral spectra $\sigma(P_{|L_p})\cap\mathbb T$ are all the same.
\end{theo}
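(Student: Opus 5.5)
The plan is to characterize unimodular spectral points as approximate eigenvalues, and then to transport approximate eigenvectors from one $L_p$ to another by a Mazur map, using a near-equality case of Jensen's inequality for the transition kernel. First, since $P$ is a contraction on each complex $L_p(m)$, we have $\sigma(P_{|L_p})\subset \DD$. Hence every $\lambda\in\sigma(P_{|L_p})\cap\mathbb T$ lies on the boundary of the spectrum, and is therefore an approximate eigenvalue: there are $f_n\in L_p$ with $\|f_n\|_p=1$ and $\|Pf_n-\lambda f_n\|_p\to 0$. Conversely, an approximate eigenvalue lies in the spectrum. So it suffices to fix $1<p,q<\infty$ and $\lambda\in\mathbb T$, and to show that approximate eigenvectors for $\lambda$ in $L_p$ yield approximate eigenvectors for $\lambda$ in $L_q$.

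The key step is to lift the approximate eigenvector relation to the path space. Let $\|f\|_p=1$ and $\|Pf-\lambda f\|_p=\delta$ be small. Then $\|Pf\|_p\ge 1-\delta$. For each $x$ we have $|Pf(x)|^p\le P|f|^p(x)$ by Jensen for the probability $P(x,\cdot)$, and
$$\int P|f|^p\,dm=\|f\|_p^p=1 .$$
Thus the integrated Jensen gap $\int (P|f|^p-|Pf|^p)\,dm$ is at most $1-(1-\delta)^p$. By uniform convexity of $L_p(P(x,dy))$ (a Clarkson-type inequality in its quantitative form, applied pointwise in $x$ and then integrated), I will deduce
$$\E_m|f(X_1)-Pf(X_0)|^p=\int\!\!\int |f(y)-Pf(x)|^p\,P(x,dy)\,dm(x)\le \omega_p(\delta),$$
with $\omega_p(\delta)\to0$ as $\delta\to 0$. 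Combining this with $Pf\approx \lambda f$ gives $\|f(X_1)-\lambda f(X_0)\|_{L_p(\P_m)}\le \omega_p(\delta)^{1/p}+\delta$. I expect this step to be the main obstacle. One must obtain a uniform modulus: the pointwise gap $P|f|^p-|Pf|^p$ controls $\int|f(y)-Pf(x)|^p P(x,dy)$ only relative to the local mass $P|f|^p(x)$. This needs a homogeneous form of the uniform convexity estimate, and a split into the set where the local mass is small and its complement. For $p<2$ the power-type modulus differs, and I would handle this with the standard inequality for $p\le 2$.

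Next, transport the relation by the Mazur map. Let $\varphi(f):=|f|^{p/q}\,\mathrm{sgn}(f)$, where $\mathrm{sgn}(f)=f/|f|$ on $\{f\neq0\}$. Then $\|\varphi(f)\|_q=1$, the map $\varphi$ is uniformly continuous from the unit sphere of $L_p(\P_m)$ to that of $L_q(\P_m)$, and $\varphi(\lambda g)=\lambda\varphi(g)$ for $|\lambda|=1$. Applying this to the random variables $f(X_1)$ and $\lambda f(X_0)$, which both have norm $1$ in $L_p(\P_m)$ by invariance of $m$, gives that $\|\varphi(f)(X_1)-\lambda\varphi(f)(X_0)\|_{L_q(\P_m)}$ is small. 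Finally, $P\varphi(f)(X_0)=\E_m[\varphi(f)(X_1)\mid X_0]$, and conditional expectation is a contraction of $L_q$. Hence
$$\|P\varphi(f)-\lambda\varphi(f)\|_q\le \|\varphi(f)(X_1)-\lambda\varphi(f)(X_0)\|_{L_q(\P_m)}\to0$$
along the sequence $f_n$.

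Therefore $\lambda\in\sigma(P_{|L_q})$, and by symmetry in $p$ and $q$ the peripheral spectra coincide. No ergodicity is used anywhere in this argument.
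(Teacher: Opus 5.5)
Your argument is correct, and it takes a genuinely different route from the paper. The paper never uses approximate eigenvectors. It first observes that for a contraction $T$, $I-T$ is invertible if and only if $\|\frac1n\sum_{k=1}^nT^k\|\to0$. So for $\lambda\in\mathbb T$, invertibility of $\lambda I-P$ on $L_{p_0}$ means $\|\frac1n\sum_{k=1}^n(\bar\lambda P)^k\|_{p_0}\to0$. Riesz--Thorin between $p_0$ and some $p_1>p$, where these Ces\`aro means only need norm at most $1$, transfers this to $L_p$. The reverse inclusion comes from interpolating with an exponent $p_2<p_0$.

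That proof is a few lines and transfers the resolvent set on $\mathbb T$ rather than the approximate point spectrum. It also uses only that $P$ contracts every $L_r$, so positivity and the kernel play no role. Your proof uses the Markov structure essentially (Jensen for $P(x,\cdot)$, and $Pg(X_0)=\E_m[g(X_1)\mid X_0]$) and needs quantitative uniform convexity, so it is longer. In exchange it is constructive: it gives the trajectory statement $\|f_n(X_1)-\lambda f_n(X_0)\|_{L_p(\P_m)}\to0$ and explicit approximate eigenvectors $|f_n|^{p/q}\mathrm{sgn}(f_n)$ in $L_q$. It treats any pair $p,q$ directly, and taking $q=1$ in the Mazur step also yields the paper's Remark 2, $\sigma(P_{|L_p})\cap\mathbb T\subset\sigma(P_{|L_1})\cap\mathbb T$. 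In your approach the need for $p>1$ appears exactly as uniform convexity.

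The step you flagged does go through with a modulus independent of $f$. Write $c=Pf(x)$, $A=P|f|^p$ and $G=A-|Pf|^p$.
\begin{itemize}
\item For $p\ge2$, apply Clarkson's inequality in $L_p(P(x,\cdot))$ to $f$ and the constant $c$, and combine it with $\|\frac12(f+c)\|_{L_p(P(x,\cdot))}\ge|c|$. This gives pointwise $\int|f(y)-Pf(x)|^p\,P(x,dy)\le2^{p-1}\big(P|f|^p(x)-|Pf(x)|^p\big)$, so $\omega_p(\delta)=2^{p-1}(1-(1-\delta)^p)$ and no splitting is needed.
\item For $1<p<2$, use $\|u\|_p^2+(p-1)\|v\|_p^2\le\big(\frac12(\|u+v\|_p^p+\|u-v\|_p^p)\big)^{2/p}$ with $u=\frac12(f+c)$, $v=\frac12(f-c)$. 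This gives a pointwise bound $C_pA^{1-p/2}G^{p/2}$. H\"older's inequality then yields $\E_m|f(X_1)-Pf(X_0)|^p\le C_p\big(\int G\,dm\big)^{p/2}$.
\end{itemize}
If you prefer a splitting argument, split according to the ratio $G/A$: on $\{G>\tau A\}$ one has $\int A\,dm\le\tau^{-1}\int G\,dm$. A split according to the size of $A$ alone would require uniform integrability of $P|f_n|^p$, which is not available.
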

\begin{proof} We first prove the following.

\noindent{\bf Claim.} {\it Let $T$ be a contraction on a Banach space. Then $I-T$ is 
invertible if and only if $\|\frac1n\sum_{k=1}^{n} T^k\| \to 0$.} The "only if" is obvious. 
For the "if" part, for large $n$ fixed, $\|\frac1n\sum_{k=1}^{n} T^k\| <1$, so 
$$
(I-T)\Big(\frac1n\sum_{k=1}^n\sum_{j=0}^{k-1}T^j \Big)=I-\frac1n\sum_{k=0}^{n-1} T^k
$$
is invertible. Hence $I-T$ is invertible.
\smallskip

To prove the theorem, we show that given $1<p_0<p<\infty$  and $\lambda\in\mathbb T$,
$\lambda I-P$ is invertible in $L_{p_0}$ if and only if it is invertible in $L_p$. 
	
Assume first that $\lambda I-P$ is invertible in $L_{p_0}$,  so
$\|\frac1n \sum_{k=1}^n (\bar\lambda P)^k\|_{p_0} \to 0$.  Take $p_1 \in (p,\infty)$. 
Then $\frac1{p_1} < \frac1p < \frac1{p_0}$, so $\frac1p= t\frac1{p_1} + (1-t)\frac1{p_0}$ 
for some $0<t<1$. By the Riesz-Thorin convexity theorem (e.g. \cite[Theorem XII.1.11]{Z}),
$$
\Big\|\frac1n \sum_{k=1}^n (\bar\lambda P)^k\Big\|_p \le
	\Big\|\frac1n \sum_{k=1}^n (\bar\lambda P)^k\Big\|_{p_0}^{1-t} \cdot
	\Big\|\frac1n \sum_{k=1}^n (\bar\lambda P)^k\Big\|_{p_1}^{t}  \le
	\Big\|\frac1n \sum_{k=1}^n (\bar\lambda P)^k\Big\|_{p_0}^{1-t} \to 0.
$$
Hence $\lambda I-P$ is invertible in $L_p$.  we conclude that
$\sigma(P_{|L_p})\cap\mathbb T \subset \sigma(P_{|L_{p_0}})\cap\mathbb T$. 

For the opposite inclusion, i.e. when $\lambda I-P$ is invertible in $L_{p}$, 
we choose $p_2 \in(1,p_0)$ and interpolate for $p_2<p_0<p$.
\end{proof}

{\bf Remarks.} 1. Theorem \ref{gluck} was first proved by Jochen Gl\"uck (private 
communication), using (in addition to the interpolation) a deep result from \cite{KT}.

 2. We always have $\sigma(P_{|L_p})\cap\mathbb T \subset\sigma(P_{|L _1} )\cap\mathbb T$.
Indeed, if $\lambda \in \mathbb T$ is not in $\sigma(P_{L_1})$, we can take in the previous 
proof $p_0=1$.

3. While the peripheral spectra are all the same for $1<p< \infty$, the full spectra
may be different. An example (of a convolution on the unit circle) in which
$\sigma(P_{|L_p}) \ne \sigma(P_{|L_2})$ when $1<p<2$ was constructed in \cite{Sato}.

\medskip

 Bradley (private communication) constructed a symmetric ergodic Markov operator $P$
 such that the reversible stationary chain it generates is not $\alpha$-mixing, though
the peripheral spectrum of $P$ in $L_2$ is $\{1\}$ (so $P$ is Ritt). Since the chain is 
not $\alpha$-mixing, $\lim_{n\to\infty}\|P^n-E\|_2>0$, and by Proposition \ref{quasi-ritt} 
has no global CLT.

\begin{lem} \label{eigen}
Let $P$ with invariant probability $m$ be  ergodic. If $Pf=\lambda f \in L_1(m)$ for
some $\lambda \in \mathbb T$, then $f$ is bounded and $|f|$ is constant a.e.
\end{lem}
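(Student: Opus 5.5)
The plan is to prove first that $|f|$ is constant a.e., and then deduce boundedness at once, since a function with constant modulus is bounded. Since $|\lambda|=1$ and $P$ is a positive contraction, we have pointwise a.e.
$$
|f| = |\lambda f| = |Pf| \le P|f| .
$$
Integrating with respect to the invariant probability $m$ gives $\int P|f|\,dm=\int |f|\,dm$. Hence the nonnegative function $P|f|-|f|$ has integral zero, so $P|f|=|f|$ a.e.

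The remaining issue is that ergodicity is assumed only in $L_\infty(m)$, whereas $|f|$ is merely in $L_1$. I will handle this with truncation. For $c>0$, let $g_c:=\min(|f|,c)\in L_\infty$. Positivity of $P$ gives $Pg_c\le P|f|=|f|$, and since $P\mathbf 1=\mathbf 1$ also $Pg_c\le c$. Therefore $Pg_c\le g_c$. Integrating against $m$ again shows $\int Pg_c\,dm=\int g_c\,dm$, and as before this forces $Pg_c=g_c$ a.e. Ergodicity in $L_\infty$ then makes each $g_c$ a.e. constant.

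If $|f|$ were not a.e. constant, there would be some level $c$ with $0<m(|f|<c)<1$, and then $g_c$ would not be constant, a contradiction. So $|f|$ is a.e. constant, and in particular $f$ is bounded.

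The only step needing care is this passage from $L_1$ to $L_\infty$, which the truncation argument resolves. An alternative would be to invoke ergodicity of the shift on $(\Omega,\P_m)$: $|f|(X_n)$ forms a nonnegative martingale with stationary law, hence is shift-invariant. I expect the truncation route to be shorter.
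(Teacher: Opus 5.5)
Your proof is correct and follows essentially the paper's argument. Pointwise $|f|\le P|f|$ together with invariance of $m$ gives $P|f|=|f|$, and the truncations $\min(|f|,c)\in L_\infty$ are then $P$-invariant, so ergodicity makes them constant; you also spell out the step $P\min(|f|,c)\le\min(|f|,c)$, which the paper compresses into ``Similarly''.
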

\begin{proof}We have $|f|=|Pf| \le P|f|$. Then $\int|f|dm \le \int P|f|dm \le \int |f|dm$
yields that $P|f|=|f|$ a.e. Similarly, for every $n\in \mathbb N$ we have 
$P\big(\min(|f|,n)\big)=\min(|f|,n)$, so by ergodicity $\min(|f|,n)$ is constant, which
yields that $|f|$ is constant a.e.
\end{proof}

\begin{theo} \label{totally-ergodic}
 Let $P$ with invariant probability $m$ be  ergodic and $1<p<\infty$.
Then $\|P^n-E\|_p\to 0$ (i.e. $P$ has a spectral gap in $L_p(m)$)
if and only if  $P$ is uniformly ergodic in $L_p$ and all the powers $P^k$ are ergodic.
\end{theo}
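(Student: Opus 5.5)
The plan is to prove both directions of Theorem \ref{totally-ergodic} in $L_p$ using only the Claim proved inside Theorem \ref{gluck}, Lemma \ref{eigen} and Theorem \ref{gluck}. Throughout, write $T:=P_{|L_{p,0}}$, a contraction on the complex $L_{p,0}$, and note that $\|P^n-E\|_p\to 0$ is the same as $\|T^n\|\to 0$. Since $(P-E)^n=P^n-E$, this is in turn equivalent to the spectral radius $r(T)<1$.

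\emph{Necessity.} If $\|T^n\|\to0$, then $\|\frac1n\sum_{k=1}^n T^k\|\to 0$, so $P$ is uniformly ergodic in $L_p$. To see that every power $P^k$ is ergodic, suppose $P^kf=f$ with $f\in L_\infty(m)$. Then $f-Ef\in L_{p,0}$ and $T^{kn}(f-Ef)=f-Ef$ for all $n$. Letting $n\to\infty$ gives $f-Ef=0$, so $f$ is constant.

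\emph{Sufficiency.} By the Claim applied to $\bar\lambda T$, a point $\lambda\in\mathbb T$ is outside $\sigma(T)$ if and only if $\|\frac1n\sum_{k=1}^n(\bar\lambda T)^k\|\to0$. For $\lambda=1$ this is exactly uniform ergodicity, so $1\notin\sigma(T)$. The main step is to rule out the other unimodular points, and this is where I expect the obstacle: a point of $\sigma(T)\cap\mathbb T$ need not be an eigenvalue. The plan is as follows.

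First, Theorem \ref{gluck} shows that $\sigma(P_{|L_p})\cap\mathbb T=\sigma(P_{|L_2})\cap\mathbb T$, so it suffices to work in $L_2$. There, uniform ergodicity transfers by the same Riesz--Thorin interpolation used in the proof of Theorem \ref{gluck}, applied to $\frac1n\sum_{k=1}^n P^k-E$.

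Second, I would show that the peripheral spectrum consists of eigenvalues of finite multiplicity. Because $1\notin\sigma(T)$ and $\sigma(T)$ is closed, $1$ is isolated in $\sigma(P)$. I would then argue that every $\lambda\in\sigma(P)\cap\mathbb T$ is a pole. The first approach to try: the peripheral spectrum of a positive contraction on a Banach lattice is cyclic, and uniform ergodicity forces the Riesz projection at $1$ to be the rank-one projection $E$. Combining cyclicity with isolation of $1$, the peripheral spectrum should be a finite group of roots of unity, each a pole. If that route fails, the fallback is to pass to a power $P^q$ and use the fact that uniform ergodicity of $P$ together with ergodicity of $P^q$ gives uniform ergodicity of $P^q$; this would come from expressing $\frac1n\sum_k P^{qk}$ through the averages of $P$ twisted by $q$-th roots of unity.

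Once the peripheral points are known to be eigenvalues, take $Pf=\lambda f$ with $\lambda\ne1$. By Lemma \ref{eigen}, $f$ is bounded and $|f|$ is constant a.e. If $\lambda$ is a root of unity, say $\lambda^q=1$, then $P^qf=f$ and ergodicity of $P^q$ forces $f$ to be constant, hence $\lambda=1$, a contradiction. For an eigenvalue $\lambda$ that is not a root of unity, the isolation argument above already excludes it.

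Finally, $\sigma(T)\cap\mathbb T=\emptyset$, so $r(T)<1$ and therefore $\|P^n-E\|_p\to0$.
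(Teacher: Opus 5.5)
Your necessity argument is correct, and so is your last paragraph: a unimodular eigenvalue $\lambda\neq 1$ must be a root of unity because $1$ is isolated, and it is then excluded by Lemma~\ref{eigen} and ergodicity of $P^q$. This essentially coincides with the paper, which uses Gl\"uck's result that powers of unimodular eigenvalues are eigenvalues where you use cyclicity.

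The genuine gap is the step you yourself flag: that every point of $\sigma(P)\cap\mathbb T$ is an eigenvalue. Cyclicity of the peripheral spectrum (Lotz's theorem, available since $1$ is a pole) together with isolation of $1$ only places the peripheral spectrum in a finite set of roots of unity. It says nothing about the nature of these points, so ``each a pole'' does not follow. Consider the symmetric Markov operator acting on two-point blocks $\{x_k,y_k\}$ (each point of $m$-mass $w_k/2$, $\sum_k w_k=1$) by $\bigl(\begin{smallmatrix} a_k & 1-a_k\\ 1-a_k & a_k\end{smallmatrix}\bigr)$, with $0<a_k\le 1/2$ and $a_k\to 0$. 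It has $\sigma(P)=\{1\}\cup\{2a_k-1:\,k\ge 1\}\cup\{-1\}$. Here $1$ is an isolated pole and the peripheral spectrum $\{\pm1\}$ is cyclic, yet $-1$ is not an eigenvalue. So ergodicity, i.e. the rank-one residue $E$, must enter substantively, and your sketch does not say how.

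Your fallback is circular. By spectral mapping, uniform ergodicity of the ergodic $P^q$ is equivalent to $\sigma(T)$ containing no $q$-th root of unity. The identity $\frac1n\sum_{l=0}^{n-1}P^{ql}=\sum_{j=0}^{q-1}\frac1{nq}\sum_{k=0}^{nq-1}(\omega^jP)^k$, with $\omega=e^{2\pi i/q}$, only transfers the problem to operator-norm convergence of the twisted averages. By the Claim, that convergence is the same as $\omega^{-j}\notin\sigma(T)$, which is what you are trying to prove.

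The paper never proves that peripheral spectral points are eigenvalues. After ruling out unimodular eigenvalues $\neq 1$, it invokes \cite[Proposition 3.3]{DL4}: for ergodic $P$, $\|P^n-E\|\to0$ if and only if $P$ is uniformly ergodic and has no unimodular eigenvalues other than $1$. That result is exactly your missing ingredient.

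If you want to avoid citing it, your reduction to $L_2$ makes a direct argument possible. Sketch:
\begin{enumerate}
\item For $\lambda\in\sigma(P_{|L_2})\cap\mathbb T$, which is a boundary point of the spectrum, take unit vectors $f_k$ with $\|Pf_k-\lambda f_k\|_2\to0$.
\item From $P|f_k|\ge|Pf_k|$ and $\|P\|_2\le 1$ you get $\|P|f_k|-|f_k|\|_2\to0$. By uniform ergodicity $|f_k|$ is then close to a constant, so $f_k$ may be replaced by unimodular functions $u_k$.
\item Then $\iint|u_k(y)-\lambda u_k(x)|^2P(x,dy)\,dm(x)=2-2\,\mathrm{Re}\big(\bar\lambda\langle Pu_k,u_k\rangle\big)\to 0$.
\item This makes $u_k\bar u_l$ approximately $P$-invariant, hence (uniform ergodicity again) nearly constant, so $|\langle u_k,u_l\rangle|\to 1$.
\item After unimodular rescaling, $(u_k)$ converges to a genuine eigenvector.
\end{enumerate}
As it stands, however, the central step of your sufficiency proof is asserted rather than proved.
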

\begin{proof} Assume $\|P^n-E\|_p\to 0$. Then obviously $P$ us uniformly ergodic in $L_p$,
and $\|(P^k)^n- E\|_p \to 0$ as $n\to\infty$ shows that $P^k$ is ergodic.

Assume now that $P$ is uniformly ergodic in $L_p$ and all its powers are ergodic. Let 
$\lambda \in \mathbb T$ be an eigenvalue of $P$ on the complex $L_p(m)$. By
uniform ergodicity, $I-P$ is invertible on $L_{p,0}=(I-P)L_p$, so 1 is isolated in $\sigma(P)$.
By Gl\"uck \cite{Gl} all powers of $\lambda$ are also eigenvalues, so $\lambda$ is a root 
of unity,  $\lambda^k=1$. Let $Pf=\lambda f \in L_p(m)$. Then $P^kf =f$. By Lemma 
\ref{eigen} $f$ is bounded, and by ergodicity of $P^k$, $f$ is constant a.e., so $\lambda=1$.
This shows that $P$ has no unimodular eigenvalues $\ne 1$, so $\|P^n-E\|_p \to 0$ by 
\cite[Proposition 3.3]{DL4}.
\end{proof}
\medskip

\subsection{Finite peripheral spectrum} 
Let $T$ be a power-bounded operator on a complex Banach space.

\begin{lem}  \label{ritt-power}
Let $T$ be Ritt. Then $T^d$ is Ritt for every integer $d>1$.
\end{lem}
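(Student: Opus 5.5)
The plan is to use the telescoping factorization
\[
I-T^d=(I-T)(I+T+\cdots+T^{d-1}).
\]
Since $T$ is power-bounded, $T^d$ is power-bounded with $\sup_n\|T^{dn}\|\le \sup_k\|T^k\|=:M$. So the only thing to check is the Ritt rate $\sup_n n\|(T^d)^n(I-T^d)\|<\infty$.

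Write
\[
(T^d)^n(I-T^d)=T^{dn}(I-T)\sum_{j=0}^{d-1}T^j=\sum_{j=0}^{d-1}T^{dn+j}(I-T).
\]
Put $K:=\sup_k k\|T^k(I-T)\|<\infty$. For $n\ge1$ each term satisfies
\[
\|T^{dn+j}(I-T)\|\le \frac{K}{dn+j}\le \frac{K}{dn}.
\]
Summing the $d$ terms gives $\|T^{dn}(I-T^d)\|\le K/n$, hence $n\|(T^d)^n(I-T^d)\|\le K$ for all $n\ge1$.

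A slightly cruder route avoids the sum of shifted powers altogether: write $T^{dn}(I-T^d)=T^{dn}(I-T)\cdot(I+\cdots+T^{d-1})$. The second factor has norm at most $dM$, and $\|T^{dn}(I-T)\|\le K/(dn)$, so again $n\|T^{dn}(I-T^d)\|\le KM$. Either route suffices.

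There is no real obstacle here. The only point needing care is the definition used: the paper's Ritt operator is power-bounded with $\sup_n n\|T^n(I-T)\|<\infty$. Power-boundedness of $T^d$ is immediate, so the estimate above completes the argument.
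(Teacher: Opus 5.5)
Your proof is correct and is essentially the paper's argument. The paper factors $I-T^d$ over the $d$th roots of unity as $(I-T)\prod_{j=1}^{d-1}(\lambda_j I-T)$ (up to sign); the second factor is, up to sign, your $I+T+\cdots+T^{d-1}$. It then bounds $dn\|T^{dn}(I-T)\|\le K$ times the norm of that fixed factor, which is exactly your ``cruder route.'' Your first route, applying the Ritt bound at the shifted exponents $dn+j$, only improves the constant to $K$ instead of $KM$.
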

\begin{proof}
	Put $K:=\sup_n n\|T^n(I-T)\| <\infty$.
Fix $d$ and let $\lambda_j:=\e^{2\pi ij/d}$, $0\le j \le d-1$ be the $d$th roots of unity.
Then
$$
n\|T^{nd}(I-T^d)\|=n\|T^{nd} \prod_{j=0}^{d-1}(\lambda_j I-T)\| \le
$$
$$
d^{-1}dn\|T^{nd}(I-T)\|\cdot\|\prod_{j=1}^{d-1}(\lambda_j I-T)\| \le
d^{-1}K\,\|\prod_{j=1}^{d-1}(\lambda_j I-T)\|.
$$
Similarly, if $T$ is quasi-Ritt, so is $T^d$.
\end{proof}

If $T$ is a symmetric contraction on $H$ with $\sigma(T)=[-1,1]$, then $T^2$ is Ritt and $T$
is not.

\begin{prop}
The following are equivalent for $T$ power-bounded with $\sigma(T)\cap\mathbb T=\{1\}$.

	(i) $T$ is Ritt.

	(ii) For every integer $d\ge 1$, $T^d$ is Ritt.

	(iii) For some $d>1$, $T^d$ is Ritt.
\end{prop}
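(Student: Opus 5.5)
The implications (i)$\Rightarrow$(ii) and (ii)$\Rightarrow$(iii) need little work. The first is Lemma \ref{ritt-power}, together with the trivial case $d=1$. The second is immediate. The whole content is therefore (iii)$\Rightarrow$(i): assume $S:=T^d$ is Ritt for some $d>1$, with $K:=\sup_n n\|S^n(I-S)\|<\infty$. We must show $\sup_n n\|T^n(I-T)\|<\infty$, using $\sigma(T)\cap\mathbb T=\{1\}$.

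Let $\lambda_j=\e^{2\pi ij/d}$ as in the proof of Lemma \ref{ritt-power}, and factor
$$
I-S=I-T^d=(I-T)\,Q(T),\qquad Q(T):=\prod_{j=1}^{d-1}(\lambda_j I-T).
$$
Each $\lambda_j$ with $1\le j\le d-1$ is unimodular and different from $1$, so $\lambda_j\notin\sigma(T)$. Hence each factor $\lambda_jI-T$ is invertible, and $Q(T)$ is invertible with bounded inverse. Therefore $I-T=Q(T)^{-1}(I-S)$. Since $Q(T)^{-1}$ commutes with $T$, for $n=md+r$ with $0\le r<d$ we get
$$
T^n(I-T)=Q(T)^{-1}\,T^r\,S^m(I-S),
$$
and so
$$
\|T^n(I-T)\|\le \|Q(T)^{-1}\|\,\Big(\sup_k\|T^k\|\Big)\,\frac{K}{m}.
$$
Because $m\ge n/d-1$, this gives $n\|T^n(I-T)\|\le C$ for all $n\ge 2d$, say. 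The finitely many smaller $n$ are bounded by power-boundedness. This proves that $T$ is Ritt.

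The only genuinely essential step is the invertibility of $Q(T)$. This is exactly where the hypothesis $\sigma(T)\cap\mathbb T=\{1\}$ enters, and the example noted before the proposition (a symmetric $T$ with $\sigma(T)=[-1,1]$, where $T^2$ is Ritt but $T$ is not) shows the hypothesis cannot be dropped: there $-1\in\sigma(T)$ is a root of $1-z^2$. I expect no further obstacle. One only has to check that the splitting $n=md+r$ and the power-boundedness constant are handled uniformly, which is routine.
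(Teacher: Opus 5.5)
Your proposal is correct and is essentially the paper's proof. The paper likewise inverts $\prod_{j=1}^{d-1}(\lambda_j I-T)$ through the resolvents $R(\lambda_j,T)$, which exist because $\sigma(T)\cap\mathbb T=\{1\}$, then splits $n=\ell d+r$ and absorbs $T^r$ by power-boundedness.

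One harmless nit: in fact $(I-T)\prod_{j=1}^{d-1}(\lambda_jI-T)=(-1)^{d+1}(I-T^d)$, so your factorization is off by a sign when $d$ is even. This does not affect any norm estimate, or you can avoid it by taking $Q(T)=I+T+\dots+T^{d-1}$ instead.
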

\begin{proof}
	(i) implies (ii) By Lemma \ref{ritt-power}, and obviously (ii) implies (iii).

Assume (iii). Let $\lambda_j:=\e^{2\pi ij/d}$, $0\le j\le d-1$ be the $d$th roots
of unity. Since $\sigma(T)\cap\mathbb T=\{1\}$, for $j>0$ the resolvent $R(\lambda_j,T)$
is well definded, and then
$$
n\|T^{nd}(I-T)\|= n\|T^{nd}(I-T)\prod_{j=1}^{d-1}(\lambda_j I-T)R(\lambda_j,T)\| \le
$$
$$
n\|T^{nd}(I-T^d)\|\prod_{j=1}^{d-1}\|R(\lambda_j,T)\| \le
	K_d \prod_{j=1}^{d-1}\|R(\lambda_j,T)\|:= C.
$$
	Putting $n=\ell d+r$, $0\le r <d$, we obtain  \newline
	$n\|T^n(I-T)\|\le \max_{0\le r<d}\|T^r\|(\ell+1)d\|T^{\ell d}(I-T)\|
	\le C \,\max_{0\le r<d}\|T^r\|.$
\end{proof}

\begin{theo}
Let $T$ be  power-bonuded on a complex Banach space with $r(T)=1$.

(i) If $\sigma(T)\cap\mathbb T$ is a finite set $\{\lambda_1,\dots,\lambda_k\}$, then
\begin{equation}\ \label{ritt-E}
\lim_{n\to\infty} \big\|T^n\prod_{j=1}^k(\lambda_j I-T)\big\| =0.
\end{equation}

	(ii) If for some finite set $S:=\{\lambda_1,\dots,\lambda_k\} \subset \mathbb T$
	\eqref{ritt-E} holds, then $\sigma(T)\cap\mathbb T \subset S$.
\end{theo}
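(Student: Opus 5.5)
Part (ii) is the easy direction and follows the pattern of Remark 2 after Theorem \ref{kt}. Let $\lambda\in\sigma(T)\cap\mathbb T$ and set $p(z):=z^n\prod_{j=1}^k(\lambda_j-z)$. By the spectral mapping theorem, $p(\lambda)\in\sigma\big(T^n\prod_j(\lambda_jI-T)\big)$. Since $|\lambda^n|=1$, this gives
$$
\prod_{j=1}^k|\lambda_j-\lambda| \le r\Big(T^n\prod_{j=1}^k(\lambda_jI-T)\Big)\le \Big\|T^n\prod_{j=1}^k(\lambda_jI-T)\Big\|\underset{n\to\infty}\longrightarrow 0 .
$$
The left side does not depend on $n$, so it vanishes and $\lambda\in S$.

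For part (i), I would reduce to the Katznelson--Tzafriri theorem (Theorem \ref{kt}) through a spectral decomposition. Since the peripheral spectrum $F:=\{\lambda_1,\dots,\lambda_k\}$ is finite, each $\lambda_j$ is an isolated point of $\sigma(T)$; this is automatic because $\sigma(T)\cap\mathbb T$ is finite and the rest of $\sigma(T)$ lies in the closed unit disk. I would group the spectrum as $\sigma(T)=\{\lambda_1\}\cup\dots\cup\{\lambda_k\}\cup\sigma_0$, where $\sigma_0:=\sigma(T)\setminus F$ is compact. Then $\sigma_0$ lies in the open disk, except that it may accumulate at the $\lambda_j$. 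That accumulation breaks the naive Riesz decomposition: $\sigma(T)\cap\mathbb T$ isolated in $\mathbb T$ does not make $\lambda_j$ isolated in $\sigma(T)$. So I would avoid Riesz projections and instead apply Theorem \ref{kt} to a family of rotated operators.

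The plan is induction on $k$, with the key identity that $\lambda_jI-T$ kills one peripheral point. Write $Q_j:=\prod_{i\ne j}(\lambda_iI-T)$. Since the polynomials $\prod_{i\ne j}(\lambda_i-z)$ have no common zero, Lagrange interpolation gives scalars $c_j$ with $\sum_j c_j\prod_{i\ne j}(\lambda_i-z)\equiv 1$. Hence
$$
T^n\prod_{i=1}^k(\lambda_iI-T)=\sum_j c_j\,T^n(\lambda_jI-T)\,Q_j^{2}\prod_{i\ne j}(\lambda_i I-T)^{-1}\cdots,
$$
which is not quite usable as written. I would simplify by taking $A:=\prod_{i=1}^k(\lambda_iI-T)$ and $R_j:=Q_j$, and using $\sum_j c_jR_j=I$. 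This gives $T^nA=\sum_j c_j\,\lambda_j^n\,\big[(\bar\lambda_jT)^n(I-\bar\lambda_jT)\big]\,\lambda_jR_jQ_j$. It therefore suffices to show $\|(\bar\lambda_jT)^n(I-\bar\lambda_jT)Q_j\|\to0$ for each $j$. Now $V_j:=\bar\lambda_jT$ is power-bounded with peripheral spectrum $\bar\lambda_jF$, which contains $1$.

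This last step is the main obstacle: Theorem \ref{kt} cannot be applied to $V_j$ directly, because $V_j$ has other peripheral points. I would handle it by restricting to the closed $T$-invariant subspace $Y_j:=\overline{Q_jX}$. On $Y_j$, I would show that the peripheral spectrum of $T|_{Y_j}$ is contained in $\{\lambda_j\}$, then apply Theorem \ref{kt} to $V_j|_{Y_j}$, and finally conclude using $\|V_j^n(I-V_j)Q_j\|\le\|V_j^n(I-V_j)|_{Y_j}\|\,\|Q_j\|$. The peripheral-spectrum claim is the delicate point. Precisely, for $\mu\in\mathbb T\setminus\{\lambda_j\}$ I need $\mu I-T$ to be invertible on $Y_j$. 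If $\mu\notin F$, the resolvent $R(\mu,T)$ commutes with $Q_j$ and so preserves $Y_j$. If $\mu=\lambda_i$ with $i\ne j$, I must show that $\lambda_iI-T$ is invertible on $\overline{Q_jX}$ despite the factor $(\lambda_iI-T)$ appearing in $Q_j$. If this fails, I would fall back to a direct adaptation of the Katznelson--Tzafriri proof, i.e. the functional-calculus estimate with a function vanishing on $F$, applied to $p(z)=\prod_j(\lambda_j-z)$.
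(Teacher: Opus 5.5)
Your part (ii) is the paper's argument (spectral mapping theorem plus $r(\cdot)\le\|\cdot\|$) and is fine. Part (i), however, has a genuine gap at exactly the step you flagged, and your route cannot close it.

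The paper does not reduce to the one-point Theorem \ref{kt}. It invokes the general Katznelson--Tzafriri theorem \cite[Theorem 5]{KT}: $\|T^nf(T)\|\to0$ for $f$ in the analytic Wiener algebra that is of spectral synthesis with respect to $\sigma(T)\cap\mathbb T$. A polynomial vanishing on a finite set qualifies, and $p(z)=\prod_j(\lambda_j-z)$ is such a polynomial. Your reduction does not recover this, for two reasons.

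\emph{The Lagrange step does no work.} Since $\prod_i(\lambda_iI-T)=(\lambda_jI-T)Q_j$ for every $j$, you have $T^n\prod_i(\lambda_iI-T)=\lambda_j^{n+1}V_j^n(I-V_j)Q_j$ identically. So ``it suffices to show $\|V_j^n(I-V_j)Q_j\|\to0$'' is just (i) restated.

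\emph{The key claim is false in general.} You need $\sigma(T|_{Y_j})\cap\mathbb T\subset\{\lambda_j\}$ for $Y_j=\overline{Q_jX}$. Let $T$ be multiplication by $t$ on $L_2[-1,1]$, a symmetric contraction with $\sigma(T)=[-1,1]$ as in the remark after Lemma \ref{ritt-power}, and take $\lambda_1=1$, $\lambda_2=-1$. Then $Q_1=-(I+T)$ is multiplication by $-(1+t)\ne0$ a.e., so it has dense range. Hence $Y_1=L_2[-1,1]$, and $T|_{Y_1}=T$ still has $-1$ in its spectrum; the case $j=2$ is symmetric. Your opening claim that the $\lambda_j$ are automatically isolated in $\sigma(T)$ is wrong, as you then concede. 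When they are not isolated there is no Riesz projection, and closures of ranges of polynomials in $T$ cannot separate the peripheral points. The one-point theorem suffices only in special cases. For instance, if all $\lambda_j$ are $d$th roots of unity, apply Theorem \ref{kt} to $T^d$ and cancel from $I-T^d=\prod_{\omega^d=1}(\omega I-T)$ the invertible factors with $\omega\notin F$.

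What is missing is the multi-point Katznelson--Tzafriri theorem itself, which your fallback only names. Either cite it, as the paper does, or prove it. A short proof is the limit-isometry argument of V\~u:
\begin{enumerate}
\item Put $l(x):=\limsup_n\|T^nx\|$. Then $T$ induces an isometry $W$ on the completion of $X/\ker l$.
\item Resolvents of $T$ pass to this quotient, so $\sigma(W)\subset\sigma(T)\ne\bar{\mathbb D}$. Hence $W$ is invertible with $\sigma(W)\subset F$.
\item By Riesz decomposition and Gelfand's theorem (a doubly power-bounded operator with spectrum $\{\lambda\}$ is $\lambda I$), $W=\sum_{\lambda\in\sigma(W)}\lambda E_\lambda$. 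So $p(W)=0$, i.e. $\|T^np(T)x\|\to0$ for every $x$.
\item Apply this to left multiplication $A\mapsto TA$ on $B(X)$, which is power-bounded with spectrum inside $\sigma(T)$, at $A=I$. This yields the norm convergence \eqref{ritt-E}.
\end{enumerate}
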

\begin{proof} (i) follows from \cite[Theorem 5]{KT}.

Proof of (ii): let $\lambda \in \sigma(T)\cap\mathbb T$. By the spectral mapping theorem,
$\lambda^n\prod_{j=1}^k(\lambda_j-\lambda)$ is in
$\sigma\big(T^n\prod_{j=1}^k(\lambda_j I-T)\big)$, so \eqref{ritt-E} implies
$$
|\prod_{j=1}^k(\lambda_j -\lambda)|=|\lambda^n\prod_{j=1}^k(\lambda_j-\lambda)|\le
\|T^n\prod_{j=1}^k(\lambda_j I-T)|| \to 0.
$$
Hence $\lambda$ equals one of the $\lambda_j$.
\end{proof}

Bouabdillah and Le Merdy \cite{BLM} studied resolvent conditions which yield the convergence
\eqref{ritt-E} with rate $O(1/n)$.

\bigskip

{\bf Acknowledgements.} Part of this work was done during visits of the second author to
the university of Brest; its hospitality is gratefully acknowledged.

The authors thank Jochen Gl\"uck and Rick Bradley for their private communications,
and Guy Cohen for discussions on the Appendix. 

The first author was supported by the ANR project “Rawabranch”
number ANR-23-CE40-0008.

\end{document}